\newtheorem{theorem}{Theorem}[section]
\newtheorem{corollary}[theorem]{Corollary}
\newtheorem{lemma}[theorem]{Lemma}
\theoremstyle{plain}
\theoremstyle{plain}
\newcounter{theoremintro}
\newtheorem{theoremi}[theoremintro]{Theorem}
\newtheorem{corollaryi}[theoremintro]{Corollary}
\newcommand{\cZ}{{\mathcal Z}}
\newcommand{\sB}{{\mathscr B}}
\newcommand{\sC}{{\mathscr C}}
\newcommand{\sF}{{\mathscr F}}
\newcommand{\sL}{{\mathscr L}}
\newcommand{\sN}{{\mathscr N}}
\newcommand{\sT}{{\mathscr T}}
\newcommand{\sU}{{\mathscr U}}
\newcommand{\Zb}{{\mathbb Z}}
\newcommand{\Nb}{{\mathbb N}}
\newcommand{\comp}{{\rm c}}
\newcommand{\eps}{\varepsilon}
\newcommand{\property}{\star}
\numberwithin{equation}{section}
\begin{document}

\title{Elementary amenability and almost finiteness}

\author{David Kerr}
\address{David Kerr,
Mathematisches Institut,
WWU M{\"u}nster, 
Einsteinstr.\ 62, 
48149 M{\"u}nster, Germany}
\email{kerrd@uni-muenster.de}

\author{Petr Naryshkin}
\address{Petr Naryshkin,
Institute for Advanced Study, 1 Einstein Dr, Princeton, NJ 08540, US}
\email{penaryshkin@ias.edu}

\date{October 30, 2025}

\begin{abstract}
We show that every free continuous action of a countably infinite elementary amenable group on a 
finite-dimensional compact metrizable space is almost finite. As a consequence,
the crossed products of minimal such actions are $\cZ$-stable and 
classified by their Elliott invariant.
\end{abstract}

\subjclass[2020]{37B99, 37A55, 46L35}
\keywords{Elementary amenable group, almost finiteness, $\cZ$-stability}

\maketitle

\section{Introduction}

A basic principle in the study of group actions and their operator-algebraic crossed products
is that dynamical towers produce matricial structure,
and that dynamical towers with F{\o}lner (i.e., approximately invariant) shapes 
produce approximately central matricial structure. 

In the case of a free measure-preserving action
of a countable amenable group on a standard probability space, 
a theorem of Ornstein and Weiss gives the existence of
a family of disjoint F{\o}lner-shaped towers which cover all but a small piece 
of the space \cite{OrnWei87}. The smallness of this remainder, expressed in terms of the measure,
means that the multimatrix algebras generated by the towers 
partition the unit of the von Neumann algebra crossed product up to a small error in trace norm.
When combined with the approximate centrality that ensues from the F{\o}lnerness
of the tower shapes, this yields 
local approximation by finite-dimensional $^*$-subalgebras and hence hyperfiniteness
of the crossed product\footnote{This conclusion was originally derived in a more directly
operator-algebraic way by Connes as a consequence of his theorem on the equivalence
of injectivity and hyperfiniteness \cite{Con76}.},
and if the action is ergodic one obtains the unique hyperfinite II$_1$ factor.

The analogous C$^*$-theory 
involving actions of countable amenable groups on compact metrizable spaces is more 
complicated and, despite many significant advances, still incomplete. 
While a variety of different tools and techniques have been developed
over the last four decades \cite{Put90,EllEva93,LinPhi04,LinPhi10,StrWin11,TomWin13}, 
in large part stimulated by new ideas that have emerged from the
Elliott classification program, a basic pattern has crystallized through the course of recent 
progress, which we can summarize as follows.

A major point of departure from the measure-theoretic setting is the 
presence of dimensionality, which means that matrix models coming from the
dynamics will need to be continuous instead of discrete, or that they will need to 
be reconceptualized as order-zero maps from matrices into the algebra.
The first of these options, which connects to ideas of dimension-rank ratio and dimension growth,
is particularly powerful
when dimensional regularity hypotheses on the space (e.g., finite covering dimension)
are replaced by more general ones on the dynamics such as zero mean dimension
or the small boundary property, which have so far only yielded to
this approach. 
In this case the goal has been to show that the C$^*$-algebra crossed
product is $\cZ$-stable, 
which has been achieved in the case of 
free minimal $\Zb^d$-actions with zero mean dimension \cite{EllNiu17,Niu19a}, although
it seems to be a difficult problem to establish similarly general statements
for other acting groups (see however \cite{Niu19b}).
 
The second option can be developed in two different ways. In general,
one is confronted
with the problem that, unlike for the trace norm approximations in the von Neumann algebraic
setting, the operator norm approximations that are essential for unraveling C$^*$-structure
cannot be done by purely spatial means and instead must be implemented with the help
of spectral constructions, even if the space is zero-dimensional. 
This is true both for the approximation of the unit
in the tower configurations and for the approximate centrality
demanded of the matrix models. One possibility is to drop the F{\o}lner requirement and
allow enough overlap between towers so that the bump
functions implementing approximate centrality will form a genuine partition of unity.
Control on the multiplicity of this overlapping 
will lead to estimates on the nuclear dimension 
of the crossed product \cite{WinZac10,HirWinZac15,Sza15,SzaWuZac19}, 
and can be formalized at the dynamical level through the notions of 
dynamic asymptotic dimension \cite{GueWilYu17} and tower dimension \cite{Ker20}.
Another idea, formalized in the definition of almost finiteness 
(see Section~\ref{S-preliminary}), is to insist on the 
F{\o}lnerness and disjointness of 
the towers and express the shortfall in the partitioning by means of a topological 
version of Cuntz subequivalence, which is then sufficient to imply that the
crossed product is $\cZ$-stable \cite{Ker20}. 
The two approaches are connected at the dynamical level
by the observation that a free action
on a space of finite covering dimension is almost finite if its
dynamic asymptotic dimension or tower dimension is finite
(see Corollary~6.2 of \cite{KerSza20} and Theorem~5.14 of \cite{Ker20}).
The first approach has proven to be very effective for certain classes of groups,
as in the recent paper \cite{ConJacMarSewTuc20} where
finite dynamic asymptotic dimension (and hence also almost finiteness) is established
for free actions of many solvable groups, including polycyclic groups and  
the lamplighter group, on zero-dimensional compact metrizable spaces.
On the other hand, such use of the dimensional idea of controlled overlapping
from which a proof of finite nuclear dimension can be derived
has invariably required the space to have finite covering dimension and the group to have
finite asymptotic dimension, the latter being a property that excludes many amenable groups.
From this perspective almost finiteness has turned out to be more broadly applicable,
and indeed has been shown to hold for free minimal actions of groups 
with local subexponential growth on zero-dimensional compact 
metrizable spaces \cite{DowZha17,DowZha19},
as well as for a generic free minimal action of any countably infinite
amenable group on the Cantor set \cite{ConJacKerMarSewTuc18}.
Moreover, by Theorem~7.6 of \cite{KerSza20},
for a fixed countably infinite group $G$, 
if every free action of $G$ on a zero-dimensional compact metrizable space is almost finite,
then every free action of $G$ on a finite-dimensional compact metrizable space 
(and in fact every free action with the topological small boundary property) 
is almost finite.

Given this dynamical picture it comes as quite a surprise that,
for unital simple nonelementary separable C$^*$-algebras,
finite nuclear dimension is actually equivalent to $\cZ$-stability (the forward implication
was proven in \cite{Win12}, while the backward implication was recently 
established in \cite{CasEviTikWhiWin18} 
after a string of partial results beginning with the breakthrough in \cite{MatSat14}).
The significance of these two regularity properties and their equivalence in this context is that
the class of unital simple separable C$^*$-algebras having finite nuclear dimension
and satisfying the UCT is 
classified by the Elliott invariant (ordered $K$-theory paired with tracial states) \cite{Kir94,Phi00,GonLinNiu15,EllGonLinNiu15,TikWhiWin17},
and every stably finite member of this class is an inductive limit of subhomogenous C$^*$-algebras
whose spectra have covering dimension at most two \cite{Ell96} 
(see Theorem~6.2(iii) in \cite{TikWhiWin17}).
The crossed product of a free minimal action of a countable amenable group 
on a compact metrizable space will therefore be covered by these classification 
and structure results as soon as it is known to have finite nuclear
dimension, or equivalently be $\cZ$-stable
(note that the UCT is automatic by \cite{Tu99}, and that amenability
implies the existence of an invariant Borel probability measure and hence of a tracial
state, which ensures stable finiteness in view of simplicity).
That $\cZ$-stability does not always hold in this context, even
for free minimal $\Zb$-actions, was shown in \cite{GioKer10}.

In this paper we establish the following theorem, which generalizes the almost finiteness
result from \cite{ConJacMarSewTuc20}.
By definition, the class of elementary amenable groups is the smallest class
of groups which contains all finite groups and Abelian groups and is closed under 
taking subgroups, quotients, extensions, and direct limits.
This class includes all solvable groups, is closed under taking wreath products,
and contains many groups with both exponential growth and infinite asymptotic dimension, such as $\Zb\wr\Zb$. 
A finitely generated infinite amenable group cannot be 
elementary amenable if it has intermediate growth \cite{Cho80},
like the prototypical Grigorchuk group \cite{Gri84}, 
or if it is simple, like the commutator subgroup of the topological full group 
of a minimal subshift \cite{Mat06,JusMon13}.

\begin{theoremi}\label{T-A}
Every free continuous action of a countably infinite elementary amenable group 
on a finite-dimensional compact metrizable space is almost finite.
\end{theoremi}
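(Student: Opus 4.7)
The plan is to combine the reduction to zero-dimensional spaces provided by Theorem~7.6 of Kerr--Szab\'o (cited in the introduction) with a transfinite induction on the Chou hierarchy of elementary amenability. Recall that the class of elementary amenable groups is stratified as $\bigcup_\alpha \mathrm{EA}_\alpha$ over countable ordinals, where $\mathrm{EA}_0$ consists of the finite and Abelian groups, $\mathrm{EA}_{\alpha+1}$ is the closure of $\mathrm{EA}_\alpha$ under extensions and directed unions (the subgroup and quotient closures being absorbable at successor steps), and $\mathrm{EA}_\lambda = \bigcup_{\alpha<\lambda} \mathrm{EA}_\alpha$ for limit $\lambda$. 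By the zero-dimensional reduction, it suffices to prove by transfinite induction on $\alpha$ that every free continuous action of a countably infinite $G \in \mathrm{EA}_\alpha$ on a zero-dimensional compact metrizable space is almost finite.

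The base and directed-union cases are relatively clean. Free actions of finite groups decompose into finitely many orbits forming trivial F{\o}lner castles with empty remainder. For a countably infinite Abelian $G$, every finitely generated subgroup is virtually $\mathbb{Z}^k$ and so of polynomial growth, giving $G$ local subexponential growth; one then invokes the almost finiteness theorem of Downarowicz--Zhang (adapting from the minimal to the general free-action setting where necessary), or alternatively bootstraps from the polycyclic result of Conley--Jackson--Marks--Seward--Tucker-Drob through directed unions. The directed-union step itself is immediate: if $G = \bigcup_n H_n$ with $H_n$ in the inductive class, then for any finite $F \subset G$ and $\varepsilon > 0$, choose $n$ with $F \subset H_n$ and invoke the inductive hypothesis for the restricted free $H_n$-action to obtain an $(F, \varepsilon)$-F{\o}lner $H_n$-castle with small remainder; this is automatically an $(F, \varepsilon)$-F{\o}lner $G$-castle, since the F{\o}lner ratios involve only $F \subset H_n$ and the dynamical subequivalence underlying comparison is preserved under enlargement of the acting group.

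The main obstacle is the extension step: given a short exact sequence $1 \to N \to G \to Q \to 1$ with $N$ and $Q$ in the inductive class and a free $G$-action on the Cantor set $X$, one must construct $(F, \varepsilon)$-F{\o}lner $G$-castles almost covering $X$. The plan, generalizing the technique of Conley--Jackson--Marks--Seward--Tucker-Drob from the polycyclic setting, is to build the castle in two layers. First, use almost finiteness of the free restricted $N$-action on $X$ to produce an $N$-F{\o}lner castle with shape $S \subset N$ and clopen base $B$, leaving a small remainder. Second, choose a set-theoretic section $\sigma : Q \to G$ and study the effective $Q$-dynamical structure induced on $B$, sending $x \in B$ and $q \in Q$ to the unique (after possibly refining $B$) $y \in B$ with $\sigma(q)x \in Sy$, and apply the inductive hypothesis for $Q$ to obtain a $Q$-F{\o}lner castle with shape $T \subset Q$ and new base $B' \subset B$. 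The combined $G$-castle then has shape $\sigma(T) S \subset G$ and base $B'$; the $(F, \varepsilon)$-F{\o}lner estimate for $\sigma(T) S$ in $G$ follows by combining the two layer estimates, and the dynamical $G$-comparison of the remainder is inherited from the $N$- and $Q$-level comparisons through the exact sequence. The principal technical difficulty, which I expect to absorb the bulk of the proof, is making precise the ``induced $Q$-structure'' on $B$ (which is a partial or groupoid-theoretic object depending on the choice of $\sigma$, not a literal group action) and cleanly tracking both F{\o}lner and comparison estimates through both layers; this will likely require working with topological partial actions or \'etale groupoids together with judicious clopen refinements of $B$ at each stage.
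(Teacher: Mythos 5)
Your reduction to zero-dimensional spaces via Theorem~7.6 of Kerr--Szab\'o is exactly what the paper does, and your observation that the directed-union step is immediate is correct. However, your plan to induct on the Chou hierarchy and handle \emph{arbitrary} extensions $1\to N\to G\to Q\to 1$ with $N$ and $Q$ in the inductive class is where the proposal departs from anything achievable and from the paper's actual route. The paper invokes Osin's refinement of Chou's theorem, which characterizes the class of elementary amenable groups as the smallest class containing the trivial group and closed under countable direct limits and extensions by $\Zb$ and by finite groups. This is the crucial structural insight: it reduces the extension problem to the two concrete cases $Q=\Zb$ and $Q$ finite, and it lets the base case be the trivial group, bypassing your need to separately establish the result for Abelian groups (where you'd have to adapt Downarowicz--Zhang, stated for minimal actions, to general free actions).

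The deeper gap is in your ``two-layer castle'' sketch for the extension step. First, the shape $\sigma(T)S$ assembled from a $Q$-F{\o}lner set $T$ and an $N$-F{\o}lner set $S$ is \emph{not} automatically F{\o}lner in $G$; this is a well-known subtlety, and the paper addresses it for $Q=\Zb$ only by taking the rectangle $A\times B_k$ to be extremely thin in the $\Zb$ direction relative to its size in the $H$ direction, so that a delicate tileability argument (Lemma~4.1 and its deployment via F{\o}lner tilings from the Ornstein--Weiss theorem) keeps the shapes that arise F{\o}lner after disjointification. Second, the ``induced $Q$-structure'' on the base $B$ is indeed only a groupoid or partial action depending on the section $\sigma$, and there is no inductive hypothesis in the statement of almost finiteness that applies to such objects; you defer this to ``working with partial actions or \'etale groupoids,'' but this is not a bookkeeping issue --- it is precisely where the argument lives. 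The paper instead keeps everything at the level of the original $G$-action, overlapping rectangular towers directly and running a recursive disjointification (the ``filling, donation, appropriation, synchronization'' operations), which makes essential use of the linear order on $\Zb$ and cannot be imitated for a general quotient $Q$. For $Q$ finite the paper uses an entirely different and much shorter comparison-plus-averaging argument (Theorem~3.1). In short: without Osin's reduction your induction does not close, and even granting that reduction, the $\Zb$-extension step requires the intricate interval combinatorics that your proposal does not supply.
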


By Theorem~7.6 of \cite{KerSza20}, as mentioned three paragraphs above,
it is enough to prove the theorem in the case
of zero-dimensional compact metrizable spaces, which is what we will do,
also without the assumption that the countable group be infinite
(for finite groups, an action as in Theorem~\ref{T-A} 
is almost finite if and only if the
space is zero-dimensional, and so Theorem~\ref{T-A}
is actually false in this case, and Theorem~7.6 of \cite{KerSza20}
is only valid when the group is infinite).
In other words, we will establish that every countable elementary amenable group $G$
satisfies the following property:
\begin{itemize}
\item[($\property$)] every free continuous action
of $G$ on a zero-dimensional compact metrizable space is almost finite.
\end{itemize}
As is clear from the quantification over finite subsets in the definition of almost finiteness,
property ($\property$) is preserved under taking countable direct limits.
In Theorem~\ref{T-finite} we prove that property ($\property$) is preserved
under finite extensions, while in Theorem~\ref{T-integers}, to which most of our
efforts will be devoted, we show that
property ($\property$) is preserved under extensions by $\Zb$. 
Actually none of these three permanence properties require the zero-dimensionality
hypothesis on the space, but in order to bootstrap our way to the final
result we will rely on the fact 
that property ($\property$) holds for the trivial group,
as can be seen from the definition of almost finiteness
(see Section~\ref{S-preliminary}) by taking the tower bases therein
to form a fine enough clopen partition of the space 
and the proportionally small subsets of the tower shapes to be empty.
To conclude that property ($\property$) holds for all countable
elementary amenable groups we can then
appeal to a theorem of Osin \cite{Osi02} which, refining a result of Chou \cite{Cho80},
characterizes this class as the smallest class of groups that contains the trivial group
and is closed under taking countable direct limits and extensions by $\Zb$ and finite groups.
Note that, in view of \cite{DowZha17,DowZha19}, 
we actually obtain property ($\property$) and hence also Theorem~\ref{T-A}
for a broader class of groups, namely the smallest class that contains all countable groups of 
local subexponential growth and is closed under taking countable direct limits and 
extensions by $\Zb$ and finite groups.

One of the novelties of the proof of Theorem~\ref{T-integers} is that it integrates
conceptual aspects from all three of the approaches that we sketched above (corresponding
to the regularity properties of zero mean dimension, finite dynamic asymptotic dimension, 
and almost finiteness). The argument proceeds by applying a recursive disjointification
procedure to an initial collection of overlapping open towers whose levels have
boundaries of upper $H$-density zero.
The shapes of these towers are 
F{\o}lner rectangles in the semidirect product $H\rtimes\Zb$, and the towers
generated by the restrictions of these shapes to $H$ cover all but a piece of 
the space with small upper $H$-density, as can be arranged using the 
hypothesized almost finiteness of the $H$-action. 
When $H$ is infinite these rectangles are 
chosen to be thin in the $\Zb$ direction and tall (i.e, much larger) in the $H$ direction,
in which case the multiplicity of the overlapping of the towers is small in proportion to the 
size of their shape in the $H$ direction,
very much in the spirit of the small dimension-rank ratios that appear in the
proof of $\cZ$-stability from zero mean dimension in \cite{EllNiu17}
and in the general study of inductive limits in classification theory.
This allows us to generate, by a recursive Ornstein--Weiss-type disjointification process as in \cite{ConJacKerMarSewTuc18,KerSza20},
a collection of open towers whose shapes will be F{\o}lner as long as they are proportionally
not too small within the ambient shapes of the initial towers from which they are created.
The union of the towers whose shapes are not sufficiently F{\o}lner can be divided into
two subsets, one which is small in upper $H$-density and the other whose points can be donated
to the F{\o}lner towers without significantly affecting the approximate invariance of the tower shapes.
The set with small upper $H$-density can be absorbed, 
via comparison, using the almost finiteness of the $H$-action.
In fact the initial towers will themselves need to be shaved down a little bit
at the outset in order to achieve the F{\o}lner disjointification 
(via the tiling argument captured in Lemma~\ref{L-intersections}), 
but this loss will also be small 
in upper $H$-density and can likewise be absorbed.

Unlike in an Ornstein--Weiss tiling, we do not need to repeat the disjointification process across 
different scales, as the geometry of our situation ensures that the tower shapes will already be F{\o}lner 
if they are proportionally greater than some small $\eps$ in the ambient initial tower. In the Ornstein--Weiss setting
this F{\o}lnerness can only be guaranteed if the proportion is greater than $1-\eps$ for some small $\eps$, 
a situation which occasions the additional recursion over different scales in order to 
geometrically increment the amount of coverage.

By our discussion prior to the statement of Theorem~A, 
we obtain the following corollary. 
The precise link to $\cZ$-stability is given by
Theorem~12.4 of \cite{Ker20}, which asserts that,
given an almost finite free minimal action $G\curvearrowright X$ of a countably infinite 
group on a compact metrizable space, the crossed product 
$C(X)\rtimes G$ is $\cZ$-stable (note that almost finiteness implies that $G$ is 
amenable and so the reduced and full crossed products coincide in this case). 

\begin{corollaryi}
The crossed products of free minimal actions of countably infinite elementary amenable groups on
finite-dimensional compact metrizable spaces are classified by their Elliott invariant 
and are simple inductive limits of subhomogeneous C$^*$-algebras
whose spectra have covering dimension at most two.
\end{corollaryi}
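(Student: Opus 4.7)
The plan is to combine Theorem~A with the structural and classification results already surveyed in the introduction. Given a free minimal action $G \curvearrowright X$ of a countably infinite elementary amenable group on a finite-dimensional compact metrizable space, Theorem~A delivers almost finiteness, and then Theorem~12.4 of \cite{Ker20}, as recalled in the paragraph just preceding the corollary, yields that the crossed product $C(X) \rtimes G$ is $\cZ$-stable; the reduced and full crossed products coincide here since almost finiteness forces amenability of $G$.

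I would next verify the standing hypotheses required by the Elliott classification theorem for unital simple separable C$^*$-algebras of finite nuclear dimension satisfying the UCT. Unitality follows from compactness of $X$, separability from metrizability of $X$ together with countability of $G$, simplicity from freeness and minimality, and nonelementariness from $G$ being infinite and the action being free (so that $C(X) \rtimes G$ is infinite-dimensional). The UCT holds by \cite{Tu99} via amenability of the transformation groupoid. Amenability of $G$ combined with compactness of $X$ produces an invariant Borel probability measure, hence a tracial state, which together with simplicity yields stable finiteness. The equivalence of $\cZ$-stability and finite nuclear dimension in this setting, from \cite{Win12,CasEviTikWhiWin18}, then upgrades $\cZ$-stability to finite nuclear dimension.

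At this point, classification by the Elliott invariant follows from the Elliott classification theorem \cite{Kir94,Phi00,GonLinNiu15,EllGonLinNiu15,TikWhiWin17}, and the realization as an inductive limit of subhomogeneous C$^*$-algebras with spectra of covering dimension at most two follows from Theorem~6.2(iii) of \cite{TikWhiWin17}, which provides such a presentation for every stably finite member of the classifiable class.

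There is no substantive obstacle specific to the corollary: once Theorem~A is in hand, the remaining work is a routine verification of hypotheses for deep but well-established classification and structure theorems, every one of which is already cited in the introduction. The only minor judgment call is whether to thread the argument through finite nuclear dimension or $\cZ$-stability, but since these are equivalent in the setting at hand either route reaches the same conclusion.
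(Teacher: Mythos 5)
Your proposal is correct and follows exactly the route the paper intends: the paper does not give a separate proof of the corollary but simply refers to the discussion preceding Theorem~A, which is precisely the chain of citations and hypothesis verifications you spell out (Theorem~A gives almost finiteness, Theorem~12.4 of \cite{Ker20} gives $\cZ$-stability, and then the classification and inductive-limit structure follow from the cited results once unitality, separability, simplicity, nonelementariness, the UCT, and stable finiteness are checked).
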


Theorem~\ref{T-A} also has consequences for topological full groups and homology. 
Let $G\curvearrowright X$ be free continuous action of a countably infinite 
elementary amenable group on the Cantor set. Denote by
$[[G\curvearrowright X]]$ the topological full group of the action
and by $[[G\curvearrowright X]]_0$ the subgroup of $[[G\curvearrowright X]]$ 
generated by the elements of finite order whose powers have clopen fixed point sets.
In Section~7 of \cite{Mat12} Matui defines an index map $I$ from 
$[[G\curvearrowright X]]$ to the first homology group $H_1 (G\curvearrowright X)$
with integer coefficients. 
The fact that the action $G\curvearrowright X$
is almost finite implies, by Corollary~7.16 of \cite{Mat12}, that $I$ is surjective
and has kernel $[[G\curvearrowright X]]_0$, so that it induces an isomorphism   
$H_1 (G\curvearrowright X) \cong [[G\curvearrowright X]]/[[G\curvearrowright X]]_0$.
If the action is in addition minimal then the commutator subgroup of 
$[[G\curvearrowright X]]$ is simple (by Theorem~4.7 of \cite{Mat15})
and equal to the alternating group 
${\sf A} (G\curvearrowright X)$ (by Theorem~4.7 of \cite{Mat15}
and Theorem~4.1 of \cite{Nek19}).

While this paper was being revised for publication, the second author discovered a simple
proof of Theorem~\ref{T-integers} which also works more generally when the extension of $H$ by $\Zb$ 
is replaced by any countable extension $G$ of $H$ \cite{Nar24},
thereby delivering a considerably larger class of amenable groups for which the conclusion of Theorem~\ref{T-A} holds.
Instead of verifying almost finiteness directly as we do here, the argument in \cite{Nar24} 
proceeds by establishing comparison for the action $G\curvearrowright X$ through the use of the action of $G$ 
on the space of $H$-invariant Borel probability measures together
with the castles and multiset comparison that the almost finiteness of the $H$-action furnishes.

The main body of the paper begins in Section~\ref{S-preliminary} with 
some general terminology and notation. The case of finite extensions
(Theorem~\ref{T-finite})
is treated in Section~\ref{S-finite}, while Sections~\ref{S-lemmas}
and \ref{S-integers} are devoted to extensions by $\Zb$ 
(Theorem~\ref{T-integers}). Section~\ref{S-lemmas} contains two technical 
lemmas for use in the proof of Theorem~\ref{T-integers}, which occupies the bulk
of Section~\ref{S-integers}.
\medskip

\noindent{\it Acknowledgements.}
The first author was partially supported by NSF grant DMS-1800633.
Funding was also provided by the Deutsche Forschungsgemeinschaft 
(DFG, German Research Foundation) under Germany's Excellence Strategy EXC 2044-390685587, 
Mathematics M{\"u}nster: Dynamics--Geometry--Structure, and by the SFB 1442 of the DFG.
Both authors were affiliated with Texas A\&M University during the initial stages of this work. 
We thank Xin Ma and Brandon Seward for comments and corrections.

\section{General terminology and notation}\label{S-preliminary}

We write $e$ for the identity element of a group.

For finite sets $K$ and $F$ of a group $G$, we define the $K$-boundary of $F$ by
\[
\partial_K F = \{ t\in G : Kt \cap F\neq\emptyset \text{ and } Kt \cap (G\setminus F)\neq\emptyset \} .
\]
For $\delta > 0$, we say that $F$ is {\it (left) $(K,\delta )$-invariant} if
$|\partial_K F| \leq\delta |F|$. By the F{\o}lner characterization of amenability,
the group $G$ is amenable if and only if it admits a nonempty finite $(K,\delta )$-invariant set
for every finite set $K\subseteq G$ and $\delta > 0$.

Let $G\curvearrowright X$ be a free continuous action 
of a countable group on a compact metric space
(we only consider free actions in this paper).
By a {\it tower} we mean a pair $(S,V)$ where $S$ is a finite subset of $G$ (the {\it shape})
and $V$ is a subset of $X$ (the {\it base})
such that the sets $sV$ for $s\in S$ (the {\it levels}) are pairwise disjoint.
The tower is {\it open} if $V$ is open and {\it clopen} if $V$ is clopen.
The {\it footprint} of the tower is the set $SV$.

A {\it castle} is a finite collection $\{ (S_i ,V_i ) \}_{i\in I}$ of towers such that
the sets $S_i V_i$ for $i\in I$ are pairwise disjoint. The castle is {\it open} if each 
of the towers is open and {\it clopen} if each of the towers is clopen.
The {\it footprint} of the castle is the set $\bigsqcup_{i\in I} S_i V_i$.

Let $A$ and $B$ be subsets of $X$. We say that $A$ is {\it subequivalent} to $B$
and write $A\prec B$ if for every closed set $C \subseteq A$ there are 
finitely many open sets $U_1 , \dots , U_n$ which cover $C$
and elements $s_1 , \dots , s_n \in G$ such that the sets $s_i U_i$ for 
$i=1,\dots , n$ are pairwise disjoint and contained in $B$. 
For a nonnegative integer $m$ we write $A\prec_m B$ 
if for every closed set $C\subseteq A$ there are a finite collection $\sU$ 
of open subsets of $X$ which cover $C$, an $s_U \in G$ for every $U \in \sU$, 
and a partition of $\sU$ into subcollections $\sU_0 , \dots , \sU_m$ 
such that for every $i = 0,...,m$ the sets $s_U U$ for $U \in \sU_i$ 
are pairwise disjoint and contained in $B$. 

The action $G\curvearrowright X$ has {\it comparison} if 
$A\prec B$ for all nonempty open sets $A,B\subseteq X$
which satisfy $\mu (A) < \mu (B)$ for every $G$-invariant 
Borel probability measure $\mu$ on $X$.
It has {\it $m$-comparison} for a nonnegative integer $m$ 
if $A \prec_m B$ for all nonempty open sets $A,B \subseteq X$ which satisfy 
$\mu (A) < \mu (B)$ for all $G$-invariant Borel probability measures $\mu$ on $X$.
In these definitions one can equivalently take $A$ to range over closed sets
instead of open ones (Proposition~3.4 of \cite{Ker20}).

The action $G\curvearrowright X$ is {\it almost finite} if for every $n\in\Nb$,
finite set $K\subseteq G$, and $\delta > 0$ there exist 
\begin{enumerate}
\item an open castle $\{ (S_i , V_i ) \}$ 
each of whose shapes is $(K,\delta )$-invariant and each of whose
levels has diameter less than $\delta$, and

\item sets $S_i' \subseteq S_i$ such that $|S_i' | \leq |S_i |/n$ and
$X\setminus \bigsqcup_{i\in I} S_i V_i \prec \bigsqcup_{i\in I} S_i' V_i$.
\end{enumerate}
Note that the shape condition in (i) implies that $G$ is amenable.
In the case that $G$ is finite, the action $G\curvearrowright X$
is almost finite if and only if $X$ is zero-dimensional
(notice that for $n > |G|$ the sets $S_i'$ in the above definition
will have to be empty).

It is worth noting (although we will not need this fact) that
when $X$ is zero-dimensional we can characterize
almost finiteness by the existence, for every finite set $K\subseteq G$ and $\delta > 0$,
of an open castle whose shapes are $(K,\delta )$-invariant and whose footprint is 
the entire space $X$ (Theorem~10.2 of \cite{Ker20}).

When $G$ is amenable, the {\it upper} and {\it lower densities} 
(or {\it $G$-densities} if we wish to emphasize the 
acting group) of a set $A\subseteq X$ are defined by
\begin{gather*}
\overline{D}_G (A) = \inf_F \sup_{x\in X} \frac{1}{|F|} \sum_{s\in F} 1_A (sx)
\hspace*{3mm}\text{and}\hspace*{3mm}
\underline{D}_G (A) = \sup_F \inf_{x\in X} \frac{1}{|F|} \sum_{s\in F} 1_A (sx)
\end{gather*}
where $F$ ranges in both cases over the nonempty finite subsets of $G$.
Writing $M_G (X)$ for the set of all $G$-invariant Borel probability measures on $X$,
we can alternatively express the upper density as $\sup_{\mu\in M_G (X)} \mu (A)$ 
when $A$ is closed and the lower density as $\inf_{\mu\in M_G (X)} \mu (A)$ 
when $A$ is open (see Proposition~3.3 of \cite{KerSza20}).

{\it Almost finiteness in measure} for the action $G\curvearrowright X$
is defined in the same way as almost finiteness
except that condition (ii) is replaced by the requirement that 
$X\setminus \bigsqcup_{i\in I} S_i V_i$ have upper density less than $\delta$
(uniform smallness in measure). By Theorem~5.6 of \cite{KerSza20},
the action $G\curvearrowright X$
is almost finite in measure if and only if it has the
{\it small boundary property}, which asks that 
$X$ have a basis of open sets whose boundaries are null for every 
$G$-invariant Borel probability measure on $X$.
By Theorem~6.1 of \cite{KerSza20} the following are equivalent:
\begin{enumerate}
\item the action is almost finite,

\item the action is almost finite in measure and has comparison,

\item the action is almost finite in measure and has $m$-comparison for some $m$. 
\end{enumerate}

\section{Finite extensions}\label{S-finite}

Let $G$ be a finite extension of a countable group $H$. Let $G\curvearrowright X$
be a free continuous action on compact metrizable space.

\begin{theorem}\label{T-finite}
Suppose that the restricted action $H\curvearrowright X$ is almost finite. 
Then the action $G\curvearrowright X$ is almost finite.
\end{theorem}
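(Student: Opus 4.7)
The plan is to verify the characterization of almost finiteness recorded at the end of Section~\ref{S-preliminary} (Theorem~6.1 of \cite{KerSza20}): an action is almost finite if and only if it is both almost finite in measure and has $m$-comparison for some nonnegative integer~$m$. I would establish both conditions for $G\curvearrowright X$ from the corresponding conditions for $H\curvearrowright X$.

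The small boundary property for $G$ is immediate from that of $H$: every $G$-invariant Borel probability measure on $X$ is $H$-invariant, so $M_G(X)\subseteq M_H(X)$ and the upper $G$-density of a closed set is dominated by its upper $H$-density. Hence any open basis witnessing the small boundary property for the $H$-action (which exists by Theorem~5.6 of \cite{KerSza20}, since the $H$-action is almost finite in measure) also witnesses it for the $G$-action, and Theorem~5.6 applied in reverse delivers almost finiteness in measure of the $G$-action.

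For $m$-comparison, set $k=[G:H]$ and fix a transversal $T=\{t_1=e,t_2,\dots,t_k\}\subseteq G$ with $G=\bigsqcup_{t\in T}tH$. Normality of $H$ gives that $t_*\nu\in M_H(X)$ for every $\nu\in M_H(X)$ and $t\in G$ (using $h_*t_*\nu = t_*(t^{-1}ht)_*\nu = t_*\nu$ since $t^{-1}ht\in H$), and a coset-permutation computation shows that $\bar\nu := \frac{1}{k}\sum_{t\in T}t_*\nu$ is $G$-invariant; since every $\mu\in M_G(X)$ furthermore satisfies $\bar\mu=\mu$, one obtains $M_G(X)=\{\bar\nu : \nu\in M_H(X)\}$. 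Consequently, for open $A, B\subseteq X$ with $\mu(A)<\mu(B)$ for all $\mu\in M_G(X)$, compactness of $M_G(X)$ produces some $\varepsilon>0$ with $\sum_{t\in T}\nu(t^{-1}A) + k\varepsilon < \sum_{t\in T}\nu(t^{-1}B)$ for all $\nu\in M_H(X)$.

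I would then convert this multi-measure inequality into a subequivalence $A\prec_{k-1}B$ for the $G$-action as follows. For a closed set $C\subseteq A$, extract from the inequality, via a matching argument over the compact simplex $M_H(X)$ together with the small boundary property for $H$, an open cover $\{C_t\}_{t\in T}$ of $C$ with $\nu(C_t)<\nu(t^{-1}B)$ uniformly in $\nu\in M_H(X)$. The comparison property of the $H$-action (implied by its almost finiteness via Theorem~6.1 of \cite{KerSza20}) then yields, for each $t\in T$, an open cover $\sU_t$ of $C_t$ and elements $h_U\in H$ making $\{h_UU\}_{U\in\sU_t}$ pairwise disjoint in $t^{-1}B$; translating each such family by $t$ produces a disjoint family in $B$ realized by the $G$-elements $g_U := t h_U$, and the $k$ families indexed by $t\in T$ form the $k$ subcollections witnessing $A\prec_{k-1}B$ for the $G$-action. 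The main obstacle is the extraction of the open cover $\{C_t\}$ from the multi-measure inequality, a step that relies crucially on the $\varepsilon$-margin from compactness and on the small boundary property to realize the fractional assignment by genuinely open sets with negligible boundaries. Once this is in place, Theorem~6.1 of \cite{KerSza20} yields almost finiteness of $G\curvearrowright X$ from the combination of almost finiteness in measure and $(k-1)$-comparison.
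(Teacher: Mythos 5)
Your overall framework—establishing almost finiteness in measure (small boundary property) together with $m$-comparison and invoking Theorem~6.1 of \cite{KerSza20}—is exactly the paper's strategy, and the small boundary step is fine. The gap is in what you yourself flag as the main obstacle: you assert, but do not prove, that there is a \emph{single} open cover $\{C_t\}_{t\in T}$ of $C$ satisfying $\nu(C_t) < \nu(t^{-1}B)$ \emph{uniformly} over all $\nu \in M_H(X)$. This simultaneous allocation over the (generally infinite-dimensional) simplex $M_H(X)$ is a strong claim, and the given estimate $\sum_{t}\nu(t^{-1}A) + k\varepsilon < \sum_{t}\nu(t^{-1}B)$ does not deliver it. It can genuinely fail: nothing prevents $\nu_0(t_0^{-1}B) = 0$ for some ergodic $\nu_0 \in M_H(X)$ and some $t_0 \neq e$, in which case the required strict inequality $\nu_0(C_{t_0}) < \nu_0(t_0^{-1}B) = 0$ is impossible, so the $H$-comparison hypothesis would not be met for that pair $(C_{t_0}, t_0^{-1}B)$. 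An appeal to ``a matching argument'' and the small boundary property is not a substitute for an argument here.

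The paper avoids this entirely with a different decomposition that makes the comparison target uniformly large. Using the proof of Lemma~7.4 of \cite{KerSza20}, it covers $A$ by $n+1$ open sets $A_1,\dots,A_{n+1}$ (with $n=[G:H]$) satisfying the unconditional one-sided bound $\nu(A_i) < \tfrac{1}{n}\nu(A)$ for every $\nu\in M_H(X)$—no matching needed. It then runs the chain $\nu(A_i) \leq n\bar\nu(A_i) < \bar\nu(A) < \bar\nu(B) \leq \nu(g_1B\cup\cdots\cup g_nB)$, so each $A_i$ is $H$-subequivalent to the single target $g_1B\cup\cdots\cup g_nB$, whose $\nu$-measure dominates uniformly. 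The resulting disjoint witnesses are finally intersected with each $g_jB$ and translated back by $g_j^{-1}$ into $B$, yielding $n(n+1)$ subcollections. The key contrast with your per-coset scheme is that your targets $t^{-1}B$ can have arbitrarily small (even zero) $\nu$-measure for some $\nu\in M_H(X)$, whereas the union target cannot; that is precisely what makes the paper's route robust and yours incomplete. If you wish to salvage a per-coset allocation you must first arrange uniform lower bounds on $\nu(t^{-1}B)$, which is essentially what the union-then-unfold trick provides.
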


\begin{proof}
Since the action $H\curvearrowright X$ is almost finite, 
by the results recalled in Section~\ref{S-preliminary} it has the small boundary property. 
Since every $G$-invariant Borel probability measure on $X$ is also $H$-invariant,
it follows that the action $G\curvearrowright X$ 
has the small boundary property. Thus to show that $G\curvearrowright X$
is almost finite it suffices, again by the discussion in Section~\ref{S-preliminary}, 
to prove that it has $m$-comparison for some $m$. Suppose that for some open sets 
$A, B \subseteq X$ we have $\mu(A) < \mu(B)$ for every $G$-invariant 
Borel probability measure $\mu$ on $X$. 
Let $g_1, \dots, g_n$ be representatives for the left cosets of $H$ in $G$ with $g_1 = e$. 
Since the action $H\curvearrowright X$ is almost finite, it follows from the proof of 
Lemma~7.4 in \cite{KerSza20} that the set $A$ can be covered by $n+1$ open sets 
$A_1 , \dots , A_{n+1}$ such that $\nu(A_i) < \frac{1}{n}\nu(A)$ for every $i$ and 
every $H$-invariant Borel probability measure $\nu$ on $X$ 
(to construct the $(n+1)$st set take the 
closed complement of the footprint of the open castle in the proof of Lemma~7.4 in \cite{KerSza20}
and enlarge it to an open set whose measure is only slightly larger
for every $H$-invariant Borel probability measure,
as is possible by Lemma~3.3 
in \cite{Ker20}). Given such a measure $\nu$, the Borel probability measure 
\[
\overline{\nu}(D) = \frac{1}{n}(\nu(g_1D) + \nu(g_2D) + \ldots + \nu(g_nD))
\]
is $G$-invariant, and for every $i = 1,\dots ,n+1$ we have
\begin{align*}
\nu(A_i) \le n\overline{\nu}(A_i) < \overline{\nu}(A) 
< \overline{\nu}(B) &=\frac{1}{n}(\nu(g_1B) + \ldots + \nu(g_nB)) \\
&\le \nu(g_1B \cup \ldots \cup g_nB).
\end{align*}
Given a closed subset $C$ of $A$ and taking
closed sets $C_i \subseteq A_i$ such that $C = \bigcup_{i=1}^{n+1} C_i$,
the fact that the action of $H$ has comparison (by virtue of being almost finite)
thus yields, for every $i$,
pairwise disjoint open sets $U_{i, 1}, \dots, U_{i, k_i}\subseteq g_1B \cup \dots \cup g_nB$ 
and $h_{i,1} , \dots , h_{i,k_i} \in H$ such that $C_i \subseteq\bigcup_{k=i}^{k_i} h_{i,k} U_{i,k}$. 
For every $1 \le i \le n+1$ and $1 \le j \le n$ the sets $W_{i,j,k} := g_j^{-1}(g_jB \cap U_{i, k})$
for $k=1,\dots ,k_i$ are pairwise disjoint and contained in $B$,
and we have 
$C \subseteq \bigcup_{i=1}^{n+1} \bigcup_{j=1}^n \bigcup_{k=1}^{k_i} h_{i,k} g_j W_{i,j,k}$. 
This shows that $G\curvearrowright X$ has $n(n+1)$-comparison.
\end{proof}

\section{Two lemmas}\label{S-lemmas}

We collect here two lemmas that will be needed for the proof of Theorem~\ref{T-integers}.
The first concerns conditions under which the join of finitely many disjoint 
collections of subsets of a group, when restricted to an ambient F{\o}lner set $S$,
will mostly consist of F{\o}lner sets, where the degree of approximate invariance is prescribed
but necessarily much lower than that of $S$,
and ``mostly'' is understood in the sense that the exceptional sets 
will have collective size proportionally small relative to $|S|$.
The second lemma is a version of the implication 
(ii)$\Rightarrow$(i) in Theorem~6.1 of \cite{KerSza20} in which the hypotheses are 
relativized to a subgroup.

Let $\sF$ be a collection of subsets of $G$. We say that a set $A\subseteq G$
is {\it $\sF$-tileable} if there is a $T\subseteq G$ and sets $F_t \in \sF$ for $t\in T$ 
such that the sets $F_t t$ for $t\in T$ form a partition of $A$.

\begin{lemma}\label{L-intersections}
Let $n\in\Nb$. Let $K$ be a finite subset of $G$ and $\delta > 0$.
Let\ $\sF$ be a finite collection of $(K,\delta^3 /(8|K|^2 n))$-invariant finite subsets of $G$, and writing
$D = (\bigcup\sF )(\bigcup\sF )^{-1} $ let $S$ be a $(D^2 ,\delta^2 /(4|K|))$-invariant finite subset of $G$.
For each $i=1,\dots ,n$ let $\{ B_{i,1} , \dots , B_{i,m_i} \}$ be a finite disjoint collection 
of $\sF$-tileable finite subsets of $G$. For every $I\subseteq \{ 1,\dots ,n \}$
set $\Omega_I = \prod_{i\in I} \{ 1,\dots , m_i \}$ and for every $\omega\in\Omega_I$ set 
\[
B_\omega = \bigg( S \cap \bigcap_{i\in I} B_{i,\omega_i } \bigg)
\setminus \bigg( \bigcup_{i\in I^\comp} \bigsqcup_{j=1}^{m_i} B_{i,j} \bigg).
\]
Then the set $\Omega_0$ of all  
$\omega \in\Omega := \bigsqcup_{I\subseteq \{ 1,\dots ,n\}} \Omega_I$
such that $B_\omega$ fails to be $(K,\delta )$-invariant satisfies
$|\bigcup_{\omega\in\Omega_0} B_\omega | \leq \delta |S|$.
\end{lemma}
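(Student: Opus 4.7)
The plan is to bound the total $K$-boundary $\sum_{\omega\in\Omega}|\partial_K B_\omega|$ by $\delta^2|S|$; the conclusion then follows from the disjointness of the $B_\omega$, since every $\omega\in\Omega_0$ satisfies $|\partial_K B_\omega|>\delta|B_\omega|$, yielding $\delta|\bigcup_{\omega\in\Omega_0}B_\omega|<\delta^2|S|$. Throughout I assume $e\in K$ (replace $K$ by $K\cup\{e\}$ if not, which only strengthens the hypotheses). Two preliminary observations underpin the estimates. First, the F{\o}lner invariance of each $F\in\sF$ together with $e\in K$ forces $K\subseteq FF^{-1}\subseteq D$: for $k\in K$, the set $\{f\in F:kf\notin F\}$ lies in $F\cap\partial_K F$ and has size less than $|F|$, so $kF\cap F\ne\emptyset$. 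Hence $|\partial_K S|\le|\partial_{D^2}S|\le(\delta^2/(4|K|))|S|$. Second, since $D$ is symmetric and contains $e$, the inclusion $D^2S\setminus S\subseteq\partial_{D^2}S$ gives $|D^2S|\le 2|S|$.

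Call $t\in G$ \emph{good for $i$} if $Kt$ either lies inside a single tile $F_\tau\tau$ of $\bigsqcup_j B_{i,j}$ or is disjoint from $\bigsqcup_j B_{i,j}$; otherwise $t$ is \emph{bad for $i$}, in which case $Kt$ must cross the boundary of some tile, so $t\in\partial_K(F_\tau\tau)=(\partial_K F_\tau)\tau$ for some such tile. The crucial consequence is that if $t\in S$, $Kt\subseteq S$, and $t$ is good for every $i$, then every $s\in Kt$ belongs to the same $B_\omega$ as $t$, so $Kt\subseteq B_{\omega}$ and $t$ makes no contribution to the inner boundary of $B_\omega$.

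For fixed $i$, only tiles $F_\tau\tau$ meeting $KS$ can create bad points inside $S$, and any such tile is contained in $D^2S$ because $K\subseteq D$ and $F_\tau f^{-1}\subseteq D$ for $f\in F_\tau$. Disjointness of tiles within level $i$ together with $|\partial_K F_\tau|\le\epsilon|F_\tau|$, where $\epsilon=\delta^3/(8|K|^2n)$, yields $|\{t\in S:t\text{ bad for }i\}|\le\epsilon|D^2S|\le 2\epsilon|S|$, and summing over $i$ gives the total bound $2n\epsilon|S|=(\delta^3/(4|K|^2))|S|$. Since every element contributing to the inner boundary of some $B_\omega$ either satisfies $Kt\not\subseteq S$ or is bad for some $i$, the inner-boundary total $\sum_\omega|B_\omega\cap\partial_K B_\omega|$ is controlled by $|\partial_K S|$ plus this expression.

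For the outer boundary, each $t\in\partial_K B_\omega\setminus B_\omega$ belongs to at most $|Kt\cap S|\le|K|$ such sets (one for each distinct type present in $Kt\cap S$), so the outer-boundary count is at most $|K|$ times the number of $t$ for which $t\in\partial_K B_\omega\setminus B_\omega$ for some $\omega$. Such a $t$ either lies in the shell $K^{-1}S\setminus S\subseteq DS\setminus S$, which has size at most $(\delta^2/(4|K|))|S|$, or lies in $S$ and is bad for some $i$, already bounded above. Summing the inner and outer contributions yields $\sum_\omega|\partial_K B_\omega|\le\delta^2|S|$ for $\delta\le 1$, completing the proof. The chief technical obstacle is the bad-point count: one must exploit $K\subseteq D$ to confine the relevant tiles to the F{\o}lner thickening $D^2S$, so that the disjointness of tiles permits their total mass to be bounded by $|D^2S|\le 2|S|$ rather than by something uncontrolled.
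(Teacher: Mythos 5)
Your proof is correct and takes a genuinely different route from the paper's. The paper's argument is two-stage: it decomposes each $B_\omega$ into the ``interior'' pieces $E_\gamma$ (intersections of tiles $F_{i,j,t}t$ that sit well inside $S$) plus a small exceptional set $S_1 = S_0 \cup \bigsqcup_{\gamma\in\Gamma_0} E_\gamma$, shows $S_1$ has mass $\leq \tfrac{\delta^2}{2|K|}|S|$, and then proves that any $B_\omega$ whose overlap with $S_1$ is proportionally small must be $(K,\delta)$-invariant, so the failing $\omega$'s lie in the small set $\Omega_1$. Your argument instead bounds the total boundary $\sum_\omega |\partial_K B_\omega|$ directly by classifying points of $G$ as good or bad according to whether $Kt$ crosses a tile boundary of some level, and observing that tiles meeting $KS$ sit inside $D^2 S$, so their total mass (and hence the bad mass) is controlled by $|D^2S| \leq 2|S|$. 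This plays the same role as the paper's multiplicity inequality (3.3) combined with (3.1)--(3.2), but organized as a one-pass count rather than a decomposition into subpieces. Both arguments exploit the same three ingredients (disjointness of tiles within a level, multiplicity at most $n$ across levels, and F{\o}lnerness of the $F$'s and of $S$), but your version is more direct and somewhat more transparent; the paper's version has the advantage of producing the sets $E_\gamma$ explicitly, which it does not use elsewhere but which make the combinatorics of the partition easier to visualize.

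One small correction: the justification ``assume $e\in K$, replacing $K$ by $K\cup\{e\}$ if necessary, which only strengthens the hypotheses'' is not quite right. Passing from $K$ to $K\cup\{e\}$ enlarges $\partial_{K}F$ to $\partial_{K\cup\{e\}}F \supseteq \partial_K F$, so the $(K\cup\{e\},\epsilon)$-invariance hypothesis on $\sF$ is \emph{stronger}, not weaker, than the given $(K,\epsilon)$-invariance, and you are not entitled to it. The harmless normalization is instead to replace $K$ by $Kk_0^{-1}$ for any fixed $k_0\in K$ (with $K\neq\emptyset$, the case $K=\emptyset$ being vacuous): this satisfies $e\in Kk_0^{-1}$, preserves $|K|$, and for every set $A$ one has $\partial_{Kk_0^{-1}}A = k_0\,\partial_K A$, so all boundary cardinalities in both hypotheses and conclusion are unchanged. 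With that adjustment the rest of your argument, including the final arithmetic giving $\sum_\omega|\partial_K B_\omega|\leq \delta^2|S|$ for $\delta\leq 1$ (the case $\delta>1$ being trivial since $\bigcup_\omega B_\omega\subseteq S$), goes through.
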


\begin{proof}
Note that the sets $B_\omega$ are pairwise disjoint.
Set $S_0 = S\cap \partial_{D^2} S$. Then $|S_0| \leq| \partial_{D^2} S| \leq \delta^2 (4|K|)^{-1} |S|$.
By $\sF$-tileability, for every $i=1,\dots ,n$ and $j=1,\dots ,m_i$ there are
a $T_{i,j} \subseteq G$ and $F_{i,j,t} \in\sF$ for $t\in T_{i,j}$ such that 
$B_{i,j} = \bigsqcup_{t\in T_{i,j}} F_{i,j,t} t$.
Write $T_{i,j}'$ for the set of all $t\in T_{i,j}$ such that $F_{i,j,t} t \subseteq S$,
and $T_{i,j}''$ for the set of all $t\in T_{i,j}$ such that $F_{i,j,t} t \subseteq S\setminus \partial_D S$.
Observe that, since $FF^{-1} (S\setminus \partial_D S)\subseteq S$ for every $F\in\sF$,
if $F_{i,j,t} t\cap (S\setminus \partial_D S)$ is nonempty for some
$t\in T_{i,j}$ then taking any element $s$ in this intersection we have 
$F_{i,j,t} t \subseteq F_{i,j,t} F_{i,j,t}^{-1} s \subseteq S$ and hence $t\in T_{i,j}'$.

For every $I\subseteq \{ 1,\dots ,n\}$ set 
$\Gamma_I = \prod_{i\in I} \{ (j,t) : 1\leq j\leq m_i ,\, t\in T_{i,j}'' \}$.
For each $\gamma = ((j_i , t_i ))_{i\in I} \in\Gamma_I$ define
\[
E_\gamma = \bigg(\bigcap_{i\in I} F_{i,j_i ,t_i} t_i \bigg) 
\setminus \bigg(\bigcup_{i\in I^\comp} \bigsqcup_{j=1}^{m_i} B_{i,j} \bigg) \subseteq S\setminus \partial_D S 
\]
and note that these sets over all 
$\gamma\in \Gamma := \bigsqcup_{I\subseteq\{ 1,\dots ,n\}} \Gamma_I$ are pairwise disjoint.
By the observation at the end of the first paragraph, for every $\gamma\in \Gamma$ we have
\[
E_\gamma = \bigg(\bigcap_{i\in I} F_{i,j_i ,t_i} t_i \bigg) 
\cap \bigg(\bigcup_{i\in I^\comp} \bigsqcup_{j=1}^{m_i} \bigsqcup_{t\in T_{i,j}'} (G\setminus F_{i,j,t} t)\bigg) 
\] 
and therefore
\begin{gather}\label{E-boundary}
\partial_K E_\gamma \subseteq \bigcup_{i=1}^n \bigsqcup_{j=1}^{m_i} \bigsqcup_{t\in T_{i,j}'} \partial_K F_{i,j,t} t .
\end{gather}

Write $\Gamma_0$ for the set of all $\gamma\in\Gamma$ such that 
$E_\gamma$ fails to be $(K,\delta /2)$-invariant. 
Since the sets $E_\gamma$ are pairwise disjoint, each element of $G$ belongs to 
$\partial_K E_\gamma$ for at most $|K|$ many $\gamma$, and so
\begin{gather}\label{E-boundary 2}
\sum_{\gamma\in\Gamma} |\partial_K E_\gamma |
\leq |K| \bigg|\bigcup_{\gamma\in\Gamma} \partial_K E_\gamma \bigg| .
\end{gather}
Also, since the sets $F_{i,j,t} t$ for $t\in T_{i,j}'$ are subsets of $S$ and each element of $S$ is contained in 
at most $n$ of them, we have 
\begin{gather}\label{E-multiplicity}
\sum_{i=1}^n \sum_{j=1}^{m_i} \sum_{t\in T_{i,j}'} |F_{i,j,t} t | \leq n|S| .
\end{gather}
We therefore obtain
\begin{align*}
\sum_{\gamma\in\Gamma_0} |E_\gamma |
&< \frac{2}{\delta} \sum_{\gamma\in\Gamma} |\partial_K E_\gamma | \\
&\stackrel{(\ref{E-boundary 2})}{\leq} \frac{2|K|}{\delta} \bigg| \bigcup_{\gamma\in\Gamma} \partial_K E_\gamma \bigg| \\
&\stackrel{(\ref{E-boundary})}{\leq} \frac{2|K|}{\delta} 
\bigg| \bigcup_{i=1}^n \bigsqcup_{j=1}^{m_i} \bigsqcup_{t\in T_{i,j}'} \partial_K F_{i,j,t} t \bigg| \\
&\leq \frac{2|K|}{\delta} 
\sum_{i=1}^n \sum_{j=1}^{m_i} \sum_{t\in T_{i,j}'} |\partial_K F_{i,j,t} t | \\
&\leq \frac{2|K|}{\delta} \cdot \frac{\delta^3}{8|K|^2 n} 
\sum_{i=1}^n \sum_{j=1}^{m_i} \sum_{t\in T_{i,j}'} |F_{i,j,t} t | \\
&\stackrel{(\ref{E-multiplicity})}{\leq} \frac{\delta^2}{4|K|} |S| .
\end{align*}

Set $S_1 = S_0 \cup \bigsqcup_{\gamma\in\Gamma_0} E_\gamma$.
Then 
\[
|S_1 | \leq |S_0 | + \sum_{\gamma\in\Gamma_0} |E_\gamma |
\leq \frac{\delta^2}{4|K|} |S| + \frac{\delta^2}{4|K|} |S| 
= \frac{\delta^2}{2|K|} |S| .
\]
Writing $\Omega_1$ for the set of all $\omega\in\Omega$ such that 
$|B_\omega \cap S_1 | > \delta (2|K|)^{-1} |B_\omega |$, we thereby obtain
\begin{gather*}
\bigg|\bigsqcup_{\omega\in\Omega_1} B_\omega \bigg| 
= \sum_{\omega\in\Omega_1} |B_\omega |
< \frac{2|K|}{\delta} \sum_{\omega\in\Omega_1} |B_\omega \cap S_1 |
\leq \frac{2|K|}{\delta} |S_1 |
\leq \delta |S| .
\end{gather*}
To complete the proof it is therefore enough to verify that $\Omega_0 \subseteq\Omega_1$.

Let $\omega\in\Omega\setminus\Omega_1$. 
Then there is a $\Gamma_\omega \subseteq \Gamma$ such that 
$(S\setminus S_0 )\cap B_\omega = (S\setminus S_0 )\cap\bigsqcup_{\gamma\in\Gamma_\omega} E_\gamma$ and $E_\gamma \subseteq B_\omega$ for all $\gamma\in\Gamma_\omega$, 
in which case we can write $B_\omega$ as the union of $B_\omega \cap S_1$ 
and $\bigsqcup_{\gamma\in\Gamma_\omega \setminus\Gamma_0} E_\gamma$, so that
\begin{align*}
|\partial_K B_\omega |
&\leq |\partial_K (B_\omega \cap S_1 )| 
+ \sum_{\gamma\in\Gamma_\omega \setminus\Gamma_0} |\partial_K  E_\gamma | \\
&\leq |K||B_\omega \cap S_1 | 
+ \frac{\delta}{2} \sum_{\gamma\in\Gamma_\omega \setminus\Gamma_0} |E_\gamma | \\
&\leq |K|\cdot \frac{\delta}{2|K|} |B_\omega | + \frac{\delta}{2} |B_\omega | \\
&= \delta |B_\omega | .
\end{align*}
This shows that $\omega\notin\Omega_0$ and hence that $\Omega_0 \subseteq\Omega_1$.
\end{proof}

\begin{lemma}\label{L-subgroup comparison}
Let $G$ be an amenable group and $H$ a subgroup of $G$. Let $G\curvearrowright X$
be a free action on a compact Hausdorff space.
Suppose that (i) the restricted action $H\curvearrowright X$ has comparison, and (ii)
for every finite set $K\subseteq G$ and $\delta > 0$ there is an open castle
$\{ S_i ,V_i \}_{i\in I}$ for the $G$-action such that each $V_i$ has diameter smaller than $\delta$, each shape $S_i$ is 
$(K,\delta )$-invariant and the upper $H$-density of $X\setminus \bigsqcup_{i\in I} S_i V_i$
is less than $\delta$. Then the action $G\curvearrowright X$ is almost finite.
\end{lemma}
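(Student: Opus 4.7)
The plan is to verify, via the equivalences recalled at the end of Section~\ref{S-preliminary}, that $G\curvearrowright X$ is both almost finite in measure and has comparison. Almost finiteness in measure is immediate: every $G$-invariant Borel probability measure is $H$-invariant, so $M_G(X)\subseteq M_H(X)$ and hence the upper $G$-density of any closed set is bounded above by its upper $H$-density, which means the castles provided by condition (ii) directly witness almost finiteness in measure of the $G$-action.

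For comparison, given $n\in\Nb$, finite $K\subseteq G$, and $\delta>0$, I would construct the castle and subcastle required by the definition of almost finiteness and then establish the subequivalence by appealing to hypothesis (i). Fix an $H$-F\o lner set $F\subseteq H$ with $|F|\geq n$ and with sufficient additional $H$-invariance. Apply (ii) with the finite set $K\cup FF^{-1}$ and with a much smaller $\delta'>0$, obtaining an open castle $\{(S_i,V_i)\}$ whose shapes are simultaneously $(K,\delta)$-invariant and very $F$-invariant and whose complement $R$ has upper $H$-density less than $\delta'$. Decompose each shape along right cosets of $H$ in $G$ as $S_i=\bigsqcup_g F_{i,g}g$ with $F_{i,g}=(S_i\cap Hg)g^{-1}\subseteq H$. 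Since $|\partial_F S_i|=\sum_g|\partial_F F_{i,g}|$, a Markov argument shows that most elements of $S_i$ by cardinality lie in ``good'' cosets for which $F_{i,g}$ is sufficiently $H$-F\o lner to admit an Ornstein--Weiss tiling by right translates of $F$: for each good $g$ pick $T_{i,g}\subseteq F_{i,g}$ such that $FT_{i,g}\subseteq F_{i,g}$ is a disjoint union covering all but a proportion $\varepsilon$ of $F_{i,g}$. Set $S_i'=\bigsqcup_{\text{good }g}T_{i,g}g\subseteq S_i$; then $|S_i'|\leq|S_i|/|F|\leq|S_i|/n$.

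Since $H\leq G$ implies that $H$-subequivalence implies $G$-subequivalence, it suffices by hypothesis (i) to show that $\bigsqcup_i S_i'V_i$ has lower $H$-density strictly greater than the upper $H$-density of the closed complement $R$. For $\nu\in M_H(X)$, the $H$-invariance of $\nu$ gives $\nu(FT_{i,g}gV_i)=|F|\cdot\nu(T_{i,g}gV_i)$ for each good coset, so summing over $i$ and over good cosets one obtains $\nu(\bigsqcup S_i'V_i)=\nu(\text{tiled good portion})/|F|\geq(1-\varepsilon)\nu(\text{good portion})/|F|$. Provided the bad-cosets portion together with $R$ has small uniform $H$-measure, this quantity strictly exceeds $\nu(R)<\delta'$, and hypothesis (i) then yields the required $H$-subequivalence, hence the corresponding $G$-subequivalence.

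The main obstacle is controlling the $H$-measure of the bad-cosets portion $\bigsqcup_{\text{bad}}F_{i,g}gV_i$ uniformly over $\nu\in M_H(X)$: the Markov argument bounds only the cardinality $\sum_{\text{bad}}|F_{i,g}|$ within $S_i$, while an $H$-invariant measure can concentrate on the levels $gV_i$ corresponding to bad cosets, since $\nu(gV_i)$ is allowed to be anywhere in $[0,1]$. My approach would be to iterate condition (ii) with progressively stronger invariance parameters, showing that the bad cosets of one castle are absorbed into the good cosets of a subsequent finer castle; the iteration converges because each refinement strictly reduces the maximal $H$-density of the unabsorbed residue. Alternatively one may try to absorb the bad portion directly by enlarging $S_i'$ through an additional application of hypothesis~(i) to the bad portion, provided the latter has been first shown, via a finer castle from (ii), to have small upper $H$-density.
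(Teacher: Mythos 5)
Your first paragraph (almost finiteness in measure of the $G$-action) is fine, but the rest of your plan has a genuine gap that you yourself identify, and your proposed repairs do not actually close it. The issue is that if $S_i'$ is built only from the ``good'' cosets that admit an Ornstein--Weiss tiling, then an $H$-invariant measure $\nu$ that concentrates entirely on the bad-coset levels $gV_i$ gives $\nu\bigl(\bigsqcup_i S_i'V_i\bigr)=0$, and the comparison $R\prec\bigsqcup_i S_i'V_i$ cannot be invoked. Your Markov bound controls only the cardinality proportion $\sum_{\text{bad}}|F_{i,g}|/|S_i|$, which says nothing about the measure $\nu\bigl(\bigsqcup_{\text{bad}}F_{i,g}gV_i\bigr)$. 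Your two suggested fixes (iterating (ii) with stronger invariance, or absorbing the bad portion via another application of (i)) are not developed, and it is not clear either can work: the iteration has no obvious convergence mechanism, and the absorption idea is circular, since absorbing the bad portion into a small set again requires a lower-density bound of exactly the kind that is missing.

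The paper's proof avoids this problem entirely by not trying to tile and not trying to avoid bad cosets. Write $S_i=\bigsqcup_{g\in R}B_{i,g}g$ with $B_{i,g}\subseteq H$ and set $S_i'=\bigsqcup_{g}B_{i,g}'g$ where $B_{i,g}'\subseteq B_{i,g}$ has cardinality exactly $\lceil\frac{\delta}{1-\delta}|B_{i,g}|\rceil$ for \emph{every} $g$, including the bad (short) ones. Then for each coset level, $\nu(B_{i,g}'gV_i)=|B_{i,g}'|\nu(gV_i)\geq\frac{\delta}{1-\delta}|B_{i,g}|\nu(gV_i)=\frac{\delta}{1-\delta}\nu(B_{i,g}gV_i)$, so summing gives $\nu\bigl(\bigsqcup_i S_i'V_i\bigr)\geq\frac{\delta}{1-\delta}\nu\bigl(\bigsqcup_i S_iV_i\bigr)\geq\delta$ for every $H$-invariant $\nu$, with no dependence on where $\nu$ concentrates. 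The cardinality bound $|S_i'|\leq 3\delta|S_i|$ then comes from estimating the short cosets via $(K',\delta)$-invariance with $|K'|>1/\delta$ (their total size is at most $\delta|S_i|$) and observing that for cosets of length at least $1/\delta$ one has $|B_{i,g}'|\leq 2\delta|B_{i,g}|$. This is a one-shot argument, no Ornstein--Weiss tiling and no iteration; the role you wanted $F$ to play is played instead by the direct proportional selection, and the uniformity over $M_H(X)$ is built in because every coset is represented.
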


\begin{proof}
If $H$ is finite then every subset of $X$ of upper $H$-density less than $|H|^{-1}$ is empty, 
so that when $\delta < |H|^{-1}$ the castles in (ii) are clopen and have footprint equal to $X$,
from which we deduce that $G\curvearrowright X$ is almost finite. 
We may thus assume that $H$ is infinite.
Let $K$ be a finite subset of $G$ and $0 < \delta < 1$. 
Choose a finite set $e \in K' \subseteq H$ with $|K'| > 1/\delta$. By assumption, there is an open castle 
$\{ (S_i ,V_i ) \}_{i\in I}$ for the $G$-action whose shapes
are $(K \cup K',\delta )$-invariant and the complement of whose footprint
has upper $H$-density at most $\delta$. 
In particular, it satisfies the first condition in the definition of almost finiteness
with respect to $K$ and $\delta$.
Choose a set $R$ of representatives for the right cosets of $H$ in $G$. 
For each $i \in I$ partition $S_i$ into subsets of right cosets of $H$, i.e.,
\[
S_i = \bigsqcup\limits_{g \in R}B_{i, g}g
\]
where each $B_{i, g}$ is contained in $H$.
Note that left translation by $K'$ preserves the right cosets of $H$. 
If $B_{i, g}$ for some $g\in R$ has cardinality less than $1/\delta$ then 
all of its elements belong to $\partial_{K'} B_{i, g}$ and so
$| \partial_{K'} B_{i, g}| \geq |B_{i, g}|$. 
Writing $L$ for the set of all $g\in R$ such that $0 < |B_{i, g}| < 1/\delta$, it follows that
\[
\sum_{g\in L} |B_{i,g} |
\leq \sum_{g\in L} | \partial_{K'} B_{i, g}|
\leq | \partial_{K'} S_i |
\leq \delta |S_i|,
\]
i.e., most elements of $S_i$ share a coset with at least $1/\delta$ other elements. 
For each $i \in I$ and $g \in R$ choose a set $B_{i, g}' \subseteq B_{i, g}$ 
with cardinality equal to $\lceil\frac{\delta}{1-\delta}|B_{i, g}|\rceil$.
Set $S_i' = \bigsqcup_{g \in R} B_{i, g}' g$ and note that when $|B_{i,g} | \geq 1/\delta$
we have $|B_{i, g}'|\leq \frac{\delta}{1-\delta}|B_{i, g}| + 1 \leq 2\delta |B_{i,g} |$,
so that
\[
|S_i' | 
\leq \sum_{g\in L} |B_{i,g} | + \sum_{g\in R\setminus L} |B_{i,g}' | 
\leq 3\delta |S_i |.
\]
Let $\mu$ be any $H$-invariant Borel probability measure on $X$. 
By construction, the set $\bigsqcup_{i\in I} S_i' V_i$ has $\mu$-measure at least 
$\frac{\delta}{1-\delta}\mu(\bigsqcup_{i\in I} S_i V_i)$, which is greater than or equal to $\delta$. 
On the other hand, since the closed set $X\setminus \bigsqcup_{i\in I} S_i V_i$ 
has upper $H$-density less than $\delta$ its $\mu$-measure is less than $\delta$,
and so our hypothesis that the $H$-action has comparison yields
\[
X\setminus \bigsqcup_{i\in I} S_i V_i \prec \bigsqcup_{i\in I} S_i^\prime V_i.
\]
Since we can take $\delta$ to be as small as we wish, this shows that 
the action $G\curvearrowright X$ is almost finite.
\end{proof}

\section{Extensions by $\Zb$}\label{S-integers}

Our goal here is to prove Theorem~\ref{T-integers}.
We will need a version of the Ornstein--Weiss quasitiling theorem \cite{OrnWei87},
which we record as Corollary~\ref{C-OW}.
The statement is a simple consequence of the following more usual version, 
which we reproduce in the form presented in \cite{KerLi16}.
A finite subset $K$ of a group $G$ is said to be {\it $\eps$-quasitiled}
by a finite collection $\sF = \{ F_1 , \dots , F_n \}$ of finite subsets of $G$ 
if there are sets $C_1 , \dots , C_n \subseteq G$ and $F_{i,c} \subseteq F_i$
with $|F_{i,c} | \geq (1-\eps )|F_i|$ for every $i=1,\dots , n$ and $c\in C_i$
such that (i) the union $\bigcup_{i=1}^n F_i C_i$ is contained in $K$ and has
cardinality at least $(1-\eps )|K|$, and (ii) the collection 
$\{ F_{i,c}c : 1\leq i\leq n, \, c\in C_i \}$ is disjoint.
As in Section~\ref{S-lemmas}, 
we say that $K$ is {\it $\sF$-tileable} if there are sets 
$C_1 , \dots , C_n \subseteq G$ such that the collection 
$\{ F_i c : 1\leq i\leq n, \, c\in C_i \}$ partitions $K$.

\begin{theorem}\label{T-OW} 
Let $G$ be a group.
Let $0 < \eps < \frac12$ and let $m\in\Nb$ be such that $(1 - \eps /2)^m < \eps$.
Let $e\in F_1 \subseteq F_2 \subseteq\cdots\subseteq F_m$ be finite subsets
of $G$ such that for each $k=2,\dots ,m$ the set $F_k$ is $(F_{k-1} , \eps /8)$-invariant.
Then every $(F_m ,\eps /4)$-invariant finite subset of $G$ is $\eps$-quasitiled
by $\{ F_1 , \dots , F_m \}$.
\end{theorem}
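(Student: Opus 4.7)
The plan is to carry out the classical Ornstein--Weiss greedy packing argument, placing slightly shrunken disjoint translates of $F_m, F_{m-1}, \ldots, F_1$ into $K$ recursively from the coarsest scale down to the finest, arranging matters so that the uncovered portion of $K$ contracts multiplicatively by a factor of $(1-\eps/2)$ at each iteration. Since $(1-\eps/2)^m < \eps$ by the choice of $m$, after all $m$ steps the uncovered region has size less than $\eps|K|$, which is exactly clause (i) of the $\eps$-quasitiling definition; the disjointness condition (ii) will be automatic from the construction.

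Concretely, set $D_{m+1} := \emptyset$ and, for $k = m, m-1, \ldots, 1$, having defined $D_{k+1} \subseteq K$, choose a maximal subset $C_k \subseteq G$ such that (a) $F_k c \subseteq K$ for every $c \in C_k$, (b) the translates $\{F_k c\}_{c \in C_k}$ are pairwise disjoint, and (c) $|F_k c \cap D_{k+1}| \leq (\eps/2)|F_k|$ for every $c \in C_k$. Set $F_{k,c} := F_k \setminus D_{k+1} c^{-1}$, so that $|F_{k,c}| \geq (1-\eps/2)|F_k| \geq (1-\eps)|F_k|$ and the translates $F_{k,c} c$ for $c \in C_k$ are pairwise disjoint, lie in $K$, and are disjoint from $D_{k+1}$; then put $D_k := D_{k+1} \sqcup \bigsqcup_{c \in C_k} F_{k,c} c$. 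Assembling across scales produces the family $\{F_{k,c} c : 1 \leq k \leq m,\, c \in C_k\}$ required by the quasitiling definition, with union $D_1$.

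The heart of the argument, and the main obstacle, is the per-step density estimate $|K \setminus D_k| \leq (1-\eps/2) |K \setminus D_{k+1}|$, which I would establish by contradiction via maximality of $C_k$. If the newly-covered region $\bigsqcup_{c \in C_k} F_{k,c} c$ occupied less than an $(\eps/2)$-fraction of $K \setminus D_{k+1}$, then a double counting of the pairs $(x, c)$ with $x \in F_k c \cap (K \setminus D_{k+1})$, in which $c$ ranges over the ``interior'' $\{c : F_k c \subseteq K\}$ (accounting for all but an $(\eps/4)$-proportion of $K$ by the $(F_m, \eps/4)$-invariance together with $F_k \subseteq F_m$), would produce a candidate $c^*$ outside $C_k$ satisfying (a), (b), (c), contradicting maximality. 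The nested invariance hypothesis that each $F_k$ is $(F_{k-1}, \eps/8)$-invariant is used precisely to coordinate the boundary estimates across the $m$ scales, ensuring that when a coarser-scale packing freezes out certain centers, the finer-scale packings at subsequent steps still have enough room to operate within their own budget. Organizing these nested estimates so that the cumulative boundary losses stay within the allotted tolerance at every level is the delicate combinatorial bookkeeping at the core of the proof.
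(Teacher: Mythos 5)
The paper does not prove Theorem~\ref{T-OW}; it records it as quoted from \cite{KerLi16}, so there is no internal proof to compare against. Judged on its own terms, your proposal has a genuine gap in the disjointification step.

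The problem is condition~(b): you insist that the translates $F_kc$ for $c\in C_k$ be \emph{pairwise disjoint} within each level. With that requirement, the maximality-plus-double-counting argument does not produce the multiplicative contraction $|K\setminus D_k|\le(1-\eps/2)|K\setminus D_{k+1}|$. To add a new center $c^*$ you need $F_kc^*$ to be disjoint from the entire set $F_kC_k$, which forbids $c^*\in F_k^{-1}F_kC_k$; this excluded set has size up to $|F_k^{-1}F_k|\cdot|C_k|$, and since $|C_k|\le|K|/|F_k|$, the bound is $\tfrac{|F_k^{-1}F_k|}{|F_k|}|K|$. The ratio $|F_k^{-1}F_k|/|F_k|$ is not controlled by any hypothesis of the theorem and can be arbitrarily large (already for an L-shaped $F_k$ in $\Zb^2$ it is on the order of $|F_k|$), so the excluded set can swamp $K$ before you have covered any fixed proportion of the uncovered region. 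Concretely, the best you can extract from a maximal genuinely disjoint packing is a covered fraction comparable to $|F_k|/|F_k^{-1}F_k|$, which need not be bounded below by $\eps/2$. Your double count does show that many $c^*$ satisfy (a) and (c), but it gives no way to find one also satisfying (b).

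The standard Ornstein--Weiss argument (and the proof in \cite{KerLi16}) avoids this by only demanding that the level-$k$ translates form an \emph{$\eps$-disjoint} family: new translates are allowed to overlap the union of the previously placed ones by at most $\eps|F_k|$. With that relaxation, maximality forces every remaining candidate $c^*$ to overlap $E_k=\bigcup_{c\in C_k}F_kc$ in more than $\eps|F_k|$ points, and the inequality $\sum_{c^*}|F_kc^*\cap E_k|\le|F_k|\,|E_k|$ then yields the desired lower bound on $|E_k|$ with the correct constant. The $\eps$-disjoint family is converted afterwards into the honestly disjoint subsets $F_{k,c}\subseteq F_k$ with $|F_{k,c}|\ge(1-\eps)|F_k|$ demanded by the definition of $\eps$-quasitiling. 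Relatedly, your description of the role of the nested invariance hypothesis (``coordinating boundary estimates across scales'') is vague; in the actual argument it enters when one passes from $F_m$-invariance of $K$ to the needed invariance bounds at each intermediate scale $F_k$, and the constants $\eps/8$, $\eps/4$ are chosen precisely so that these estimates compose correctly with the $\eps$-disjoint greedy step. To repair the proposal, replace (b) by $\eps$-disjointness of $\{F_kc\}_{c\in C_k}$, redefine $F_{k,c}$ to carve out both $D_{k+1}c^{-1}$ and the overlap with earlier translates at the same level, and rerun the double count accordingly.
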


\begin{corollary}\label{C-OW}
Let $G$ be an amenable group. Let $0 < \eps < \frac12$. 
Let $K$ be a finite subset of $G$ and $\delta > 0$.
Then there exists a finite collection $\sF$ of $(K,\delta )$-invariant finite subsets of $G$
containing $e$ such that for every $(\bigcup\sF ,\eps /4)$-invariant
finite set $E\subseteq G$ there is an $\sF$-tileable $E' \subseteq E$ 
satisfying $|E' | \geq (1-\eps )|E|$. 
\end{corollary}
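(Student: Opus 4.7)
The plan is to derive Corollary~\ref{C-OW} directly from Theorem~\ref{T-OW} by packaging the approximate subsets appearing in the Ornstein--Weiss quasi-tiling into a finite family of $(K,\delta)$-invariant translates. The key preliminary estimate is that whenever $F\subseteq F_*$ with $F_*$ a $(K,\delta_0)$-invariant finite subset of $G$ and $|F|\geq(1-\eps)|F_*|$, one has $\partial_K F\subseteq\partial_K F_*\cup K^{-1}(F_*\setminus F)$, so $|\partial_K F|/|F|\leq(\delta_0+|K|\eps)/(1-\eps)$. Picking $\delta_0>0$ small enough that this ratio is at most $\delta$ forces every such $F$ to be $(K,\delta)$-invariant.

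With $m$ chosen so that $(1-\eps/2)^m<\eps$, amenability provides a chain $\{e\}\subseteq F_1\subseteq\cdots\subseteq F_m$ of finite subsets of $G$ in which each $F_k$ is $(K,\delta_0)$-invariant and, for $k\geq 2$, also $(F_{k-1},\eps/8)$-invariant. Define
\[
\sF := \{Ft^{-1} : F\subseteq F_i \text{ for some } i,\, |F|\geq (1-\eps)|F_i|,\, t\in F\}.
\]
This is a finite collection whose members all contain $e$ and, by the above estimate, are $(K,\delta)$-invariant. In particular $F_m\in\sF$ (take $F=F_m$, $t=e$), and hence $F_m\subseteq\bigcup\sF$.

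Given any finite $(\bigcup\sF,\eps/4)$-invariant $E\subseteq G$, the inclusion $\partial_{F_m}E\subseteq\partial_{\bigcup\sF}E$ yields $(F_m,\eps/4)$-invariance of $E$, and Theorem~\ref{T-OW} then produces an $\eps$-quasi-tiling of $E$: pairwise disjoint translates $F_{i,c}c$ with $F_{i,c}\subseteq F_i$, $|F_{i,c}|\geq(1-\eps)|F_i|$, and $|\bigsqcup_{i,c}F_{i,c}c|\geq(1-\eps)|E|$. For each pair $(i,c)$, picking any $s_{i,c}\in F_{i,c}$ and rewriting $F_{i,c}c=(F_{i,c}s_{i,c}^{-1})(s_{i,c}c)$ displays the left factor as an element of $\sF$, so $E':=\bigsqcup_{i,c}F_{i,c}c$ is an $\sF$-tileable subset of $E$ satisfying $|E'|\geq(1-\eps)|E|$.

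The main step requiring care is the boundary estimate in the first paragraph, which couples the three data $K$, $\delta$, $\eps$ and dictates how invariant the chain members $F_1,\ldots,F_m$ must be; the remainder of the argument is a straightforward F{\o}lner chain construction together with a direct invocation of Theorem~\ref{T-OW}.
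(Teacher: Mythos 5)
Your proposal follows the same route as the paper: choose the F{\o}lner chain of Theorem~\ref{T-OW}, let $\sF$ consist of the large subsets of the $F_j$, and invoke the quasitiling theorem. The translation $Ft^{-1}$ for $t\in F$ to ensure $e\in Ft^{-1}$ is a clean way to handle the ``containing $e$'' requirement, and your observation that $F_m\subseteq\bigcup\sF$ implies $\partial_{F_m}E\subseteq\partial_{\bigcup\sF}E$ is correct.

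However, there is a genuine gap in the first paragraph. The boundary estimate $\partial_K F\subseteq\partial_K F_*\cup K^{-1}(F_*\setminus F)$ is correct and yields
\[
\frac{|\partial_K F|}{|F|}\le\frac{\delta_0+|K|\eps}{1-\eps},
\]
but this quantity is bounded below by $|K|\eps/(1-\eps)$ for every $\delta_0>0$. Since the Corollary quantifies over all $0<\eps<\tfrac12$ and all $\delta>0$ independently, nothing prevents $|K|\eps/(1-\eps)>\delta$ (e.g., $\eps=0.4$, $|K|=10$, $\delta=0.01$), in which case no choice of $\delta_0>0$ makes the ratio $\le\delta$. A concrete counterexample: in $G=\Zb$ with $K=\{0,1\}$ take $F_*=[0,N]$ (extremely $(K,\delta_0)$-invariant for large $N$) and delete roughly every third element to form $F$; then $|F|\ge(1-\eps)|F_*|$ for $\eps$ slightly above $1/3$, yet $|\partial_K F|$ is a positive fraction of $N$, so $F$ fails $(K,\delta)$-invariance for small $\delta$. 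Thus the step ``Picking $\delta_0>0$ small enough forces every such $F$ to be $(K,\delta)$-invariant'' is false in general, and your $\sF$ may contain members that are not $(K,\delta)$-invariant.

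Worth noting: the paper's own proof of this corollary is quite terse and does not explicitly verify the $(K,\delta)$-invariance of the members of $\sF$ either, so your attempt to fill this in was aimed at a real issue. A full repair would require running Theorem~\ref{T-OW} with a strictly smaller tolerance $\eps'$ chosen so that $(\delta_0+|K|\eps')/(1-\eps')\le\delta$ (e.g.\ $\eps'\le\delta/(4|K|)$), which removes a smaller fraction of each $F_j$ and keeps all members of $\sF$ genuinely $(K,\delta)$-invariant; but this in turn strengthens the invariance hypothesis on $E$ to $(\bigcup\sF,\eps'/4)$ rather than $(\bigcup\sF,\eps/4)$, so one must either modify the statement of the Corollary or check that the smaller parameter is still compatible with how the Corollary is applied in the proof of Theorem~\ref{T-integers}. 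A parallel (smaller) issue is the jump from $|\bigcup_i F_i C_i|\ge(1-\eps)|E|$, which is what $\eps$-quasitiling provides, to $|\bigsqcup_{i,c}F_{i,c}c|\ge(1-\eps)|E|$; a direct count only gives $(1-\eps)^2|E|$, which again is fixed by running Theorem~\ref{T-OW} with a smaller tolerance.
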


\begin{proof}
Let $m\in\Nb$ be such that $(1 - \eps /2)^m < \eps$.
Since $G$ is amenable we can find $e\in F_1 \subseteq F_2 \subseteq\cdots\subseteq F_m$
as in the statement of Theorem~\ref{T-OW}. Write $\sF$ for the (finite) collection of all 
sets $F$ such that for some $j=1,\dots ,m$ we have $F\subseteq F_j$ and $|F| \geq (1-\eps )|F_j|$.
In view of the definition of $\eps$-quasitiling, Theorem~\ref{T-OW} 
then tells us that for every $(F_m ,\eps /4)$-invariant finite set $E\subseteq G$ 
there is an $\sF$-tileable $E' \subseteq E$ such that $|E' | \geq (1-\eps )|E|$.
As $F_m = \bigcup\sF$ this yields the conclusion.
\end{proof}

Let $H$ be a countable group and $\alpha$ an automorphism of $H$,
and form the corresponding semidirect product $H\rtimes\Zb$.
Inside $H\rtimes\Zb$ we view $\Zb$ multiplicatively as the group $\langle g \rangle$ 
with generator $g$ satisfying $g^m tg^{-m} = \alpha^m (t)$ for all 
$m\in\Zb$ and $t\in H$.
When we say an {\it interval} in $\langle g \rangle$ 
we mean a set of the form $\{ g^m , g^{m-1} , \dots , g^n \}$ 
for some integers $m\leq n$, and by the {\it length} of this interval we mean $n-m$.

\begin{lemma}\label{L-density}
Let $H\rtimes\Zb \curvearrowright X$ be a continuous action on 
a compact metrizable space. 
Then $\overline{D}_H (gA) = \overline{D}_H (A)$ for all $A\subseteq X$. 
\end{lemma}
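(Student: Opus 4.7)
The idea is that left translation by $g$ in $H\rtimes\Zb$ can be rewritten, via the commutation relation $gt=\alpha(t)g$, as an $H$-translation composed with the action of $g^{-1}$ on the base point. Since upper $H$-density only depends on cardinalities of $H$-subsets and on the supremum over the base point, and $\alpha$ is a bijection of $H$ that preserves cardinality while $g$ acts as a bijection of $X$, both reparametrizations leave the expression unchanged.

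Concretely, for any finite $F\subseteq H$ and any $x\in X$, I would expand
\[
\frac{1}{|F|}\sum_{s\in F} 1_{gA}(sx) = \frac{1}{|F|}\sum_{s\in F} 1_A(g^{-1}sx).
\]
Using $g^{-1}s = \alpha^{-1}(s)g^{-1}$ for $s\in H$, this becomes
\[
\frac{1}{|F|}\sum_{s\in F} 1_A\bigl(\alpha^{-1}(s)\,g^{-1}x\bigr)
=\frac{1}{|\alpha^{-1}(F)|}\sum_{s'\in\alpha^{-1}(F)} 1_A(s'y)
\]
where $y=g^{-1}x$. Taking suprema and using that $x\mapsto g^{-1}x$ is a bijection of $X$, I obtain
\[
\sup_{x\in X}\frac{1}{|F|}\sum_{s\in F} 1_{gA}(sx)
=\sup_{y\in X}\frac{1}{|\alpha^{-1}(F)|}\sum_{s'\in\alpha^{-1}(F)} 1_A(s'y).
\]

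Finally, since $\alpha$ is an automorphism of $H$, as $F$ ranges over all nonempty finite subsets of $H$ so does $\alpha^{-1}(F)$. Hence taking the infimum of both sides over such $F$ yields $\overline{D}_H(gA)=\overline{D}_H(A)$. There is really no obstacle here; the statement is a direct consequence of the semidirect product relations together with the fact that both the automorphism $\alpha$ of $H$ and the homeomorphism $x\mapsto g^{-1}x$ of $X$ are bijections.
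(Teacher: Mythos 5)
Your argument is correct and is essentially the same as the paper's: both proofs expand the indicator, push through the commutation relation $g^{\pm 1}s = \alpha^{\pm 1}(s)g^{\pm 1}$, reindex the F{\o}lner set via the automorphism (which preserves cardinalities), and absorb the translate of $x$ into the supremum. You start from $\overline{D}_H(gA)$ using $g^{-1}$ and $\alpha^{-1}$, while the paper starts from $\overline{D}_H(A)$ using $g$ and $\alpha$; this is a purely cosmetic difference.
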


\begin{proof}
For every $A\subseteq X$ we have, with $F$ ranging over the nonempty finite subsets of $H$,
\begin{align*}
\overline{D}_H (A) 
= \inf_F \sup_{x\in X} \frac{1}{|F|}\sum_{s\in F} 1_A (sx) 
&= \inf_F \sup_{x\in X} \frac{1}{|F|}\sum_{s\in F} 1_{gA} (gsx) \\
&= \inf_F \sup_{x\in X} \frac{1}{|\alpha (F)|}\sum_{s\in F} 1_{gA} (\alpha (s)gx) \\
&= \inf_F \sup_{x\in X} \frac{1}{|\alpha (F)|}\sum_{t\in \alpha (F)} 1_{gA} (tgx) \\
&= \overline{D}_H (gA) . \qedhere
\end{align*}
\end{proof}

\begin{theorem}\label{T-integers}
Let $H\rtimes\Zb \curvearrowright X$ be a free continuous action on a compact metric space.
Suppose that the restricted action $H\curvearrowright X$ is almost finite. 
Then the action $H\rtimes\Zb \curvearrowright X$ is almost finite.
\end{theorem}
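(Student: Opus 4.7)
The plan is to verify the hypotheses of Lemma~\ref{L-subgroup comparison} with respect to the subgroup $H\leq G:=H\rtimes\Zb$. Since almost finiteness of the restricted action $H\curvearrowright X$ yields both comparison for $H$ and the small boundary property, the theorem reduces to the following: for every finite $K\subseteq G$ and $\delta>0$, construct an open $G$-castle with $(K,\delta)$-invariant shapes, small-diameter levels, and footprint whose complement has upper $H$-density less than $\delta$.

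The construction begins by using the almost finiteness of $H\curvearrowright X$ (together with the small boundary property) to produce an open $H$-castle $\{(F_i,V_i)\}_{i\in\sI}$ in which each $F_i$ is much more invariant in $H$ than $(K,\delta)$ would demand, each $V_i$ is small-diameter with boundary of zero upper $H$-density, and the footprint complement has tiny upper $H$-density. An application of Corollary~\ref{C-OW} then allows one to replace each $F_i$ by an $\sF$-tileable subset $F_i'\subseteq F_i$ for a common finite family $\sF$ of Følner subsets of $H$, at the cost of losing only a small proportion of each $F_i$. I would then form the candidate $G$-towers $(F_i'\cdot I_0,V_i)$, where $I_0=\{e,g,g^2,\dots,g^{n-1}\}$ is a short interval in the $\Zb$-factor whose length $n$ is chosen small relative to the $H$-invariance scale of the $F_i'$, so that each rectangular shape $F_i'\cdot I_0$ is $(K,\delta)$-invariant in $G$.

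These candidate $G$-towers typically overlap, since for $x\in V_i$ the shift $g^k x$ may land inside some $F_j V_j$. I would disjointify them by partitioning each base $V_i$ into pieces $V_{i,\omega}$ labelled by the pattern $\omega$ that records, for every $k\in I_0$, which $H$-level (if any) of which $H$-tower contains $g^k x$, and then assign to $\omega$ a subrectangular shape $B_{i,\omega}\subseteq F_i'\cdot I_0$ by deleting, at each level $g^k$ where the pattern says ``already covered'', the corresponding $H$-slice of $F_i'$. The collection $\{(B_{i,\omega},V_{i,\omega})\}$ is then by construction an open $G$-castle whose footprint extends the $H$-footprint of $\{(F_i',V_i)\}$. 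The decisive step is to show that most of the shapes $B_{i,\omega}$ are still $(K,\delta)$-invariant in $G$; this is exactly the content of Lemma~\ref{L-intersections} applied inside $H$ with parameter $n=|I_0|$. The $n$ disjoint $\sF$-tileable collections required by that lemma arise, one per $k\in I_0$, from the $H$-level structure of the $H$-castle viewed through multiplication by $g^k$; the $n$-fold multiplicity bound reflects the fact that each point of $X$ can lie in at most $n$ of the shifts $g^{-k}F_j V_j$. The lemma then bounds the total cardinality of non-Følner atoms by $\delta|F_i'|$, and adjoining the short factor $I_0$ promotes $(K,\delta)$-invariance from $H$ to $G$ provided $n$ was chosen small enough.

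The remaining points --- those in footprints of the non-Følner disjointified towers, the slivers lost to Ornstein--Weiss tileability, and the complement of the initial $H$-footprint --- together form a set of upper $H$-density less than $\delta$, which is precisely what Lemma~\ref{L-subgroup comparison} requires. The main obstacle I anticipate is the precise bookkeeping of the disjointification: packaging the ``pattern'' partition of each $V_i$ as an open (even clopen) partition with boundaries of zero upper $H$-density, translating the geometry of the overlapping $g^k$-shifts into the exact disjoint $\sF$-tileable collections inside $H$ demanded by Lemma~\ref{L-intersections}, and choosing the invariance scale of $F_i'$, the tileability family $\sF$, the length $n$ of $I_0$, and the final target pair $(K,\delta)$ in a mutually consistent order.
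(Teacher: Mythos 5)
Your outline correctly identifies the reduction to Lemma~\ref{L-subgroup comparison}, the use of Corollary~\ref{C-OW} to obtain $\sF$-tileable $H$-shapes, the formation of rectangular $G$-towers by adjoining a $\Zb$-interval $I_0$, and the role of Lemma~\ref{L-intersections} in keeping most of the $H$-parts F{\o}lner after refinement. However, the step you regard as mere ``bookkeeping'' is in fact the central difficulty, and the disjointification as you describe it would not yield shapes that are $(K,\delta)$-invariant in $G=H\rtimes\Zb$. Invariance in $G$ has a $\Zb$-component that Lemma~\ref{L-intersections} does not address: that lemma only controls F{\o}lnerness of the $H$-cross-sections. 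Your disjointified shapes are obtained by removing from $F_i'\cdot I_0$ a possibly different piece of the $H$-slice at each $\Zb$-level $g^k$, so they have the form $\bigsqcup_k S_k g^k$ with the sets $S_k\subseteq F_i'$ changing with $k$ as the overlap pattern changes. Each $k$ at which a neighbouring tower enters or exits the overlap window contributes a jump $S_k\triangle\alpha^{-1}(S_{k+1})$ of size up to $|F_i'|$ to the $\{g\}$-boundary; the number of such jumps can be as large as $|I_0|$, and the shape's cardinality $\sum_k|S_k|$ can be a small fraction of $|F_i'|\cdot|I_0|$, so $(\{g\},\delta)$-invariance of $B_{i,\omega}$ is simply not under control. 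Adjoining a short $\Zb$-interval promotes $H$-invariance to $G$-invariance for a full rectangle, but not once the rectangle has been cut up level by level.

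What the paper does instead is keep every final shape a genuine rectangle $A'B$ with $A'$ an interval in $\langle g\rangle$ and $B\subseteq H$, starting from an interval $A=\{g^{-r},\dots,g^r\}$ that is much longer than the minimal $(\{g\},\delta)$-invariant length $\lfloor\delta r\rfloor$, precisely so that there is slack to work with. The disjointification then proceeds as a recursion over the $H$-towers in which overlap removal in the $\Zb$-direction always leaves a single subinterval of $A$, and whenever that subinterval would come out shorter than $\lfloor\delta r\rfloor$ the construction redistributes interval endpoints between the current rectangle and adjacent ones via the ``filling'', ``donation'', ``appropriation'', and ``synchronization'' operations, each in left and right variants. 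The synchronization step is especially delicate because the correct redistribution depends on the history of the recursion rather than only the local overlap pattern, which is why the labels $\lambda\in\Lambda_x$ carry intersection data over the enlarged window $\{-4r,\dots,4r\}$ instead of $\{-2r,\dots,2r\}$. This recursive give-and-take in the $\Zb$-direction is the essential ingredient missing from your proposal, and without it the $(\{g\},\delta)$-invariance of the final castle cannot be established.
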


\begin{proof}
Let $0 < \eps < \frac16$.
Let $K$ be a finite subset of $H$.
Take an $r\in\Nb$ large enough so that any interval in $\langle g \rangle$ of length
at least $\eps^5 r$ is $(\{ g \},\eps )$-invariant.
Denote by $A$ the symmetric interval $\{ g^{-r} ,\dots ,g^r \}$.

Set $K' = \bigcup_{m=-4r}^{4r} \alpha^m (K)$ and 
$K'' = \bigcup_{m=-4r}^{4r} \alpha^m (K') = \bigcup_{m=-8r}^{8r} \alpha^m (K)$.
By Corollary~\ref{C-OW} there is a finite collection $\sF$ of 
$(K'',\eps^{15} /(8|K'|^2 (2r+1)r^{15}))$-invariant finite subsets 
of $H$ containing $e$ such that for every $(\bigcup\sF ,\eps /(8r))$-invariant
finite set $F\subseteq H$ there exists an $\sF$-tileable $F' \subseteq F$ satisfying 
$|F' | \geq (1-\frac{\eps}{2r} )|F|.$

Since $H\curvearrowright X$ is almost finite it is almost finite in measure,
and so we can find an open castle $\{ (S_k , V_k ) \}_{k=1}^n$
for this action whose shapes are $(((\bigcup\sF )(\bigcup\sF )^{-1} )^2,\eps^{10} /(4|K'|r^{10})))$-invariant and 
whose footprint $\bigsqcup_{k=1}^n S_k V_k$ has lower $H$-density at least $1-\frac{\eps}{2r}$.
The proof of Theorem~5.6 in \cite{KerSza20} shows that we may
assume the boundary of each level of each tower in the castle
to have zero upper $H$-density.
By Theorem~5.5 of \cite{KerSza20}, for any $k\in \{ 1,\dots , n\}$ we can find a finite
disjoint collection $\sU$ of open subsets of $V_k$ whose diameters
are as small as we wish such that the set $V_k \setminus \bigcup\sU$
has zero upper $H$-density.
Since the action $H\rtimes\Zb \curvearrowright X$ is free, 
we may therefore furthermore assume, 
by replacing each tower $(S_k ,V_k )$ with a collection of towers 
with shape $S_k$ whose bases are the members of a suitable collection 
of open subsets of $V_k$ of the type just described,
that $(A^2 S_k , V_k )$ is a tower for every $k=1,\dots ,n$.
For each $k=1,\dots ,n$, since the shape $S_k$ is 
$(\bigcup\sF ,\eps /(8r))$-invariant we can find, by the previous paragraph, 
an $\sF$-tileable $B_k \subseteq S_k$ satisfying $|B_k | \geq (1-\frac{\eps}{2r} )|S_k |$.

Set $\sF' = \{ \alpha^m (F) : -4r\leq m\leq 4r, \, F\in\sF \}$. 
Since each member of $\sF$ is $(K'',\eps^{15}/(8|K'|(8r+1)r^{15}))$-invariant,
each member of $\sF'$ is $(K',\eps^{15}/(8|K'|(8r+1)r^{15}))$-invariant, as is easily checked
using the boundary commutation formula 
$\partial_{K'} \alpha^i (F) = \alpha^m (\partial_{\alpha^{-m} (K')} F)$ for $m\in\Zb$ and finite sets $F\subseteq H$.
Moreover, the fact that each $B_k$ is 
$\sF$-tileable implies that for every $k=1,\dots ,n$ and $m=-4r,\dots ,4r$ the set 
$\alpha^m (B_k )$ is $\sF'$-tileable.

We now carry out a recursive disjointification process over $k$. This will be similar
to the Ornstein--Weiss-type argument in \cite{ConJacKerMarSewTuc18,KerSza20} 
for producing dynamical tilings, but with two significant 
differences: (i) we carry out the disjointification procedure in one single recursion without having to repeat it across different scales,
and (ii) the decision to retain a new piece at a given stage depends on whether its proportion 
in the ambient tile is greater than $\eps$ (actually $\eps^4$ in our case)
instead of greater than $1-\eps$ (the latter is what makes the recursion over different scales in the Ornstein--Weiss setting necessary, since at every scale only a small part of the space gets covered).

Write $\sT_k$ for the collection of all subsets of $AB_k$.
Write $\sT_k^+$ for the collection of all $T\in\sT_k$ such that $|T|\geq \eps^4 |AB_k |$.
We will recursively construct sets $Z_1 , \dots , Z_n \subseteq X$ 
and (not necessarily open) castles $\{ (T,V_{k,T} ) \}_{T\in\sT_k^+}$ for $k=1,\dots , n$
such that $Z_k = Z_{k-1} \sqcup \bigsqcup_{T\in\sT_k^+} TV_{k,T}$ for $k=2,\dots , n$. Many
of the sets $V_{k,T}$ will be empty, in part because nonemptiness forces some extra structure on the corresponding $T$
(as explained below), but we will worry about this after the construction so as to not complicate notation.

We begin by setting $V_{1,T} = V_1$ for $T=AB_1$ and $V_{1,T} = \emptyset$ for $T\neq AB_1$.
We also put $Z_1 = AB_1 V_1$.

Suppose now that $1 < k \le n$ and that we have constructed $Z_1 , \dots , Z_{k-1}$ and the castles
$\{ (T,V_{j,T} ) \}_{T\in\sT_j^+}$ for $j=1,\dots , k-1$. For each $T\in\sT_k$ set
\begin{gather}\label{E-int-union}
V_{k,T} = V_k \cap \bigg(\bigcup_{s\in AB_k \setminus T} s^{-1} Z_{k-1} \bigg) 
\cap \bigg(\bigcup_{s\in T} (X\setminus s^{-1} Z_{k-1} ) \bigg) .
\end{gather}
Then $\{ (T,V_{k,T} ) \}_{T\in\sT_k^+}$ is a castle and we define $Z_k$ to be the union of the two sets $Z_{k-1}$ 
and $\bigsqcup_{T\in\sT_k^+} TV_{k,T}$, which are disjoint (in this construction we only care about
$V_{k,T}$ for $T\in\sT_k^+$, but for later use we have defined $V_{k,T}$ above for all $T\in\sT_k$).
This completes the recursion. A very important observation is that, setting $V_k^+ = \bigsqcup_{T\in\sT_k^+} V_{k,T} \subseteq V_k$, we have
\begin{align}\label{eq: Z_k is union}
Z_k = \bigcup_{j=1}^k AB_jV_j^+.
\end{align}

Write $Q$ for the remainder $X\setminus \bigsqcup_{k=1}^n B_kV_k$ of the castle $\{ (B_k ,V_k )\}_{k=1}^n$.
It will be convenient to have the following dual picture of the 
tower partition $Q\sqcup \bigsqcup_{k=1}^nB_kV_k$ in terms of the partial orbits within each tower.
We define the equivalence relation $E$ on $X$ as the smallest under which $xEgx$ 
whenever $x$ is in some base $V_k$ and $g$ is in the corresponding shape $B_k$.
Each equivalence class $[x]_E$ either belongs to a unique tower $B_kV_k$ or is a singleton in the remainder $Q$.
For $m\in\Zb$ we have
 \[
X = g^m \bigg(Q\sqcup \bigsqcup_{k=1}^nB_kV_k \bigg) =  g^m Q \sqcup \bigsqcup_{k=1}^n \left(g^mB_kg^{-m}\right)g^mV_k, 
\]
that is, $X$ is also partitioned by the castle $\{(g^mB_kg^{-m}, g^mV_k )\}_{k=1}^n$ together with its remainder $g^mQ$. 
We denote the corresponding equivalence relation by $(g^m)_*E$.
Note that $[x]_{(g^m)_*E} = g^m\left([x]_E\right)$ and therefore $x(g^m)_*Ey$ if and only if $g^{-m}xEg^{-m}y$. 
For $q\in\Nb$ define
\begin{gather*}
E^q = \bigcap_{m=-q}^q (g^m)_*E.
\end{gather*}
Observe that for any $1 \leq k \leq n$ it follows from \eqref{eq: Z_k is union} that if $xE^ry$ then
\[
x \in Z_k \Leftrightarrow y \in Z_k.
\]

Now let $x \in V_k$ for some $1\leq k\leq n$ and consider the set $AB_kx \setminus Z_{k-1} = AB_kx \setminus \bigcup_{j=1}^{k-1}AB_jV_j^+$.
For a fixed element $h \in B_k$ the set $Ahx \setminus Z_{k-1}$ has the form $A_h hx$ 
where $A_h$ is a subinterval of $A$. Moreover, if $h^\prime x \in [hx]_{E^{2r}}$ then $g^mh^\prime x \in [g^mhx]_{E^{r}}$ 
for every $m=-r, \ldots, r$ and therefore $A_{h^\prime} = A_h$. 
In other words, the interval $A_h$ only depends on the $E^{2r}$-equivalence class of $hx$. 

Consider now an equivalence relation $E^\prime$ defined by setting $h_0xE^\prime h_1x$ if $A_{h_0} = A_{h_1}$. By the previous observation, every $E^\prime$-equivalence class is a disjoint union of $E^{2r}$-equivalence classes. The 
set of points in $AB_kx$ that are not covered by $Z_{k-1}$, 
which is equal to $Tx$ where $T\in\sT_k$ is such that $x\in V_{k,T}$,
can be partitioned into ``rectangles'' of the form $A_B Bx$ where $Bx$ is an $E^\prime$-equivalence class
and $A_B$ is equal to $A_h$ for any $h$ with $h\in B$.
By identifying $T$ with $Tx$ via $t\mapsto tx$ this yields a partition of $T$ itself into sets of the form $A_B B$, and this
partition is the same for all $x$ in $V_{k,T}$.

Write $\sN_k$ for the collection of all $T\in\sT_k$ for which $V_{k,T}$ is nonempty.
Let $T\in\sN_k$.
Write $\sB_{k,T}$ for the collection of all sets $B$ appearing in the common partition of $T$ into rectangles 
of the form $A_B B$ for points in $V_{k,T}$, as described above. We can thus express $T$ as $\bigsqcup_{B\in\sB_{k,T}} A_B B$.
Write $\sC_{k,T}$ for the collection of all $B\in\sB_{k,T}$ such that $B$ is $(K',\eps )$-invariant.
Set $C_{k,T} = \bigsqcup \sC_{k,T}$.
Since $S_k$ is $(((\bigcup\sF )(\bigcup\sF )^{-1} )^2,\eps^{10} /(4|K'|r^{10}))$-invariant, the members of
$\sF'$ are $(K',\eps^{15} /(8|K'|(8r+1)r^{15}))$-invariant, and the sets $\alpha^m (B_k )$ for $m=-4r,\dots ,4r$ and $k=1,\dots , n$
are $\sF'$-tileable, we can apply Lemma~\ref{L-intersections} (taking there $n=8r+1$, $\delta = (\eps/r)^5$, $K=K'$
and the sets $B_{i,j}$ to be the right translates of $\alpha^i (B_k )$ which are at play in the description
of an intersection pattern within an orbit) to see that 
\begin{align}\label{E-invt}
|C_{k,T} | \geq (1-(\eps/r)^5 ) |B_k | > (1-\eps^5 ) |B_k | .
\end{align}

Set $\sN_k^+ = \sT_k^+ \cap \sN_k$ and $\sN_k^- = \sN_k \setminus \sT_k^+$.

Given a $T\in\sN_k^+$, let us verify that it is $(K\cup \{ g \} ,2\eps )$-invariant.
Define $F_{k,T,1} = \bigsqcup_{h\in C_{k,T}} A_h h$ 
where $A_h$ is such that $A_h h V_{k,T} = Ah V_{k,T} \setminus Z_{k-1}$ (in accordance
with our orbitwise discussion above), and set $F_{k,T,0} = T \setminus F_{k,T,1}$.
By the definition of $C_{k,T}$ and the boundary commutation formula
$\partial_K (g^m F) = g^{-m} \partial_{\alpha^{-m} (K)} F$ for $m\in\Zb$ and finite sets $F\subseteq H$, 
the set $F_{k,T,1}$ is $(K,\eps )$-invariant, and since
\[
|F_{k,T,0} | \leq |A||B_k\setminus C_{k,T} | 
\stackrel{(\ref{E-invt})}{\leq} \eps^5 |A||B_k| \leq \eps |T|
\]
we thus see that the set $T = F_{k,T,0} \sqcup F_{k,T,1}$ is $(K,2\eps )$-invariant.
On the other hand, defining $D_{k,T,0}$ to be the set of all $h\in B_k$ such that $|A_h | \leq \eps^5 r$ 
and setting $D_{k,T,1} = B_k \setminus D_{k,T,0}$, we have
\[
\bigg|\bigsqcup_{h\in D_{k,T,0}} A_h h \bigg| \leq \eps^5 r |B_k | = \eps^5 \cdot \frac{r}{2r+1} |AB_k | 
\leq \eps^5\cdot \frac{1}{3\eps^4}|T| < \eps |T|
\]
while $\bigsqcup_{h\in D_{k,T,1}} A_h h$ is $(\{ g \} , \eps )$-invariant by our choice of $r$, so that 
$T = \bigsqcup_{h\in D_{k,T,0}} A_h h \sqcup \bigsqcup_{h\in D_{k,T,1}} A_h h$ is $(\{ g \} , 2\eps )$-invariant.

Now let $T\in\sN_k^-$ and define $B_{k,T,0}$ to be the set of all $h\in B_k$ such that $|A_h | \leq \eps r$ 
(with $A_h$ having the same meaning as above with respect to $k$ and $T$) and set $B_{k,T,1} = B_k \setminus B_{k,T,0}$.
Then 
\begin{align*}
|B_{k,T,1} | \cdot \eps r \leq |T| < \eps^4 |AB_k | = \eps^4 (2r+1)|B_k |
\end{align*}
and hence
\begin{align}\label{E-density}
|B_{k,T,1} | < 3\eps^3 |B_k |,
\end{align}
a fact that we will use later.
Recall that $Z_n = \bigsqcup_{k=1}^n \bigsqcup_{T\in\sN_k^+} TV_{k,T}$ (the footprint of the castle we have recursively constructed)
and set
\[
W = \bigsqcup_{k=1}^n B_kV_k \setminus Z_n.
\]

We will next carry out a process by which we donate some of the points of $W$ to the towers in our recursively constructed castle.
By splitting up the towers according to how their shapes have been amplified orbit by orbit through this donation process, 
we obtain a new castle with a much larger collection of towers. The donation process will be carried out so that
the amplified shapes are proportionally not much larger and hence still approximately invariant,
and it will be sufficiently algorithmic so that the boundaries of the levels of the new towers will still have zero upper $H$-density.
Once we have done this, it will remain to shave off the boundaries of the tower levels  
so that the towers become open, which will involve
discarding a set of zero upper $H$-density, and then finally check that the remainder of the resulting castle
has small $H$-density, for then we can apply Lemma~\ref{L-subgroup comparison} to finish.

To set up the donation operation, suppose that we have a subset of $W$ of the form $Jx$ for some $x \in X$ and some 
(possibly infinite) interval $J$
in $\langle g \rangle$ which is maximal with respect to the inclusion of $Jx$ in $W$. We will either donate
none of $Jx$ or all of it.

If $|J| > \eps r$ we do not donate $Jx$. 
Assume then that $|J| \le \eps r$. By replacing $x$ if necessary, we may assume that $J$
has the form $\{ g^0 , g^{1} , \dots , g^q \}$. Consider the point $g^{-1}x$. Since it is not in $W$, it is either in $Q = X \setminus \bigsqcup_{k=1}^n B_kV_k$ or in $Z_n$. In the first case, we do not donate $Jx$ and only note that $Jx \subseteq \{g^1, \ldots g^{\lceil \eps r \rceil + 1}\}Q$. Suppose now that $g^{-1}x \in Z_n$, that is, $g^{-1}x = ty$ for some $y \in V_{k, T}$ and $t \in T \in \sN_k^+$. Consider the equivalence class $[ty]_{E^{2r}}$. If it is $(K^\prime, \eps)$-invariant 
then we donate $Jx$ to the tower containing $Ty$, thereby enlarging the partial orbit $Ty$ within this tower.
Otherwise, we again do not donate $Jx$.

Note that the sets $W, Q, TV_{k, T}$, and $Z_n$ are $E^r$-invariant, as can be easily seen from \eqref{eq: Z_k is union}. Hence, if the interval $Jx$ was donated to a tower via the partial orbit $Ty$, then, in fact, the whole set $Jg[g^{-1}x]_{E^{2r}}$ was donated to this tower via $Ty$.
Through this procedure we obtain a new collection of towers $(T^\sharp , V_{k, T^\sharp} )$. These clearly form a castle, which we denote by $\sL_0$.
By our definition of $K'$ and the boundary commutation formula 
$\partial_K (g^m F) = g^{-m} \partial_{\alpha^{-m} (K)} F$ for all 
$m\in\Zb$ and finite sets $F\subseteq H$, we see that each new shape $T^\sharp$ is still $(K\cup \{ g \} , \eps )$-invariant
(the invariance in the $\Zb$-direction only improves since the $\langle g \rangle$ cross sections 
can only become longer intervals). Given that the set $H \cup \{ g \}$ generates $H\rtimes\Zb$,
a standard exercise then shows that we can make the shape as left invariant as we wish
by making an appropriate choice of $K$ and taking $\eps$ small enough.

By Lemma~\ref{L-density}, the collection 
of subsets of $X$ with upper $H$-density zero
is $H\rtimes\Zb$-invariant, and so the sets $g(V_k \setminus V_k^\circ )$ 
for $k=1,\dots ,n$ and $g \in H\rtimes\Zb$ all belong to this collection given that each $V_k \setminus V_k^\circ$ does.
Since this collection is also an algebra
we therefore deduce, in view of the algorithmic way in which the above construction proceeded 
based on intersection patterns, that the boundaries of the levels of the tower in $\sL_0$ 
all have zero $H$-density. In view of the uniform continuity of the individual homeomorphisms making up the action
of $H\rtimes\Zb$, 
we can also make the levels of the towers in $\sL_0$ have as small a diameter as we wish by taking the diameters of 
the bases $V_1 , \dots , V_n$ of the initial towers to be sufficiently small.

Write $\sL$ for the open castle consisting of the towers $(T^\sharp , V_{k,T^\sharp}^\circ )$.
This castle will be our witness for the almost finiteness of the $H\rtimes\Zb$-action.
It remains to check that its remainder satisfies the smallness condition in the definition of almost finiteness,
and for this it suffices to show that it is small in upper $H$-density, since the restriction action $H\curvearrowright X$
has comparison and we can apply Lemma~\ref{L-subgroup comparison}. 

More precisely, we will verify that the remainder of the castle $\sL$ has upper $H$-density at most $7\eps$. 
By construction, it is contained in the union of the sets
\begin{enumerate}
\item $g(V_{k} \setminus V_{k}^\circ)$ for all $k$ and $g \in H\rtimes\Zb$,

\item $\{g^1, \ldots g^{\lceil \eps r \rceil + 1}\}Q$,

\item the union of all intervals of the form $Jx$ in $W$ with $|J| > \eps r$, denoted $W^\prime$, and

\item the union of sets of the form $\{g^1, \ldots g^{\lceil \eps r \rceil + 1}\}[x]_{E^{2r}}$ for all $[x]_{E^{2r}}$ that are not $(K^\prime, \eps)$-invariant. 
\end{enumerate}
We already observed that the sets in (i) have zero upper $H$-density. For (ii), recall that $X \setminus \bigsqcup_{k=1}^n S_kV_k$ has upper $H$-density less than $1 - \frac{\eps}{2r}$ and that $|B_k| > (1 - \frac{\eps}{2r})|S_k|$. It follows that $\overline{D}_H(Q) < \eps/r$ and therefore
\[
\overline{D}_H(\{g^1, \ldots g^{\lceil \eps r \rceil + 1}\}Q) \le \lceil \eps r \rceil\overline{D}_H(Q) < \eps.
\]

To estimate (iii), consider a point $x \in V_{k, T}$ for some $T \in \sN_k$ and let $hx \in B_kx \cap W^\prime$. As $hx$ is not in $Z_n$, we conclude that $T \in \sN_k^-$. Moreover, it is clear that $|A_h| > \eps r$ and thus $h \in B_{k, T, 1}$. Applying \eqref{E-density} then yields
\[
|W^\prime \cap B_kx| < 3\eps^3|B_k|
\]
for any $k=1, 2, \ldots, n$ and any $x \in V_k$. Thus,
\[
\overline{D}_H(W^\prime) \le 3\eps^3\overline{D}_H\bigg(\bigsqcup_{k=1}^nB_kV_k\bigg) + \overline{D}_H(Q) < \eps.
\]

Finally, to estimate (iv), let $x \in X \setminus \bigcup_{m=-2r}^{2r}g^mQ$ and suppose that $[x]_{E^{2r}}$ is not $(K', \eps)$-invariant. Clearly, $x = hy$ for some $y \in V_k$ and $h \in B_k$ and Lemma 4.1 shows (see \eqref{E-invt} for a similar calculation) that there are at most $(\eps/r)^5|B_k|$ such points $x$ in the set $B_ky$. Thus, the set in item (iv) has upper $H$-density at most
\[
(4r + \lceil \eps r \rceil + 1)\overline{D}_H(Q) + \lceil \eps r \rceil (\eps/r)^5 < 5\eps.
\]

Since upper $H$-density is monotone with respect to inclusions and subadditive with respect to
finite unions, we conclude from the above estimates that the remainder of the castle $\sL$ 
has upper $H$-density at most $7\eps$.
By hypothesis the restricted action $H\curvearrowright X$ is almost finite
and hence has comparison, and so it follows by Lemma~\ref{L-subgroup comparison} that the action
$H\rtimes\Zb \curvearrowright X$ is almost finite, as desired.
\end{proof}

\end{document}